\documentclass[absolute]{mymathart}
\usepackage[theorems]{mymathmacros}
\usepackage[usenames,dvipsnames]{color}
\usepackage{subfig}
\usepackage[shortlabels]{enumitem}
\usepackage{hyperref}
\usepackage{amssymb}
\usepackage{graphicx}
\usepackage{epigraph}
\usepackage{amsfonts,amssymb,color}
\usepackage[mathscr]{eucal}
\usepackage{amsmath, amsthm, amscd}
\usepackage{mathrsfs}
\usepackage{amsbsy}
\usepackage{float}
\usepackage{verbatim}
\usepackage{amsfonts} 
\usepackage{pifont}


\title[Families of infinite parabolic IFS: the approximating method]{Families of infinite parabolic IFS with overlaps: the approximating method}

\author{Liangang Ma}

\address{Dept.\ of Mathematical Sciences, Binzhou University, Huanghe 5th Road No. 391, Binzhou 256600, Shandong, P. R. China} 
\email{maliangang000@163.com}
\thanks{The work is supported by ZR2019QA003 from SPNSF and 12001056 from NSFC}    

\newtheorem{theorem}[subsection]{Theorem}
\newtheorem{lemma}[subsection]{Lemma}
\newtheorem{Young's Lemma}[subsection]{Young's Lemma}
\newtheorem{MU Theorem}[subsection]{Mauldin-Urba\'nski Theorem}
\newtheorem{SSU Theorem}[subsection]{Simon-Solomyak-Urba\'nski Theorem}
\newtheorem{Prohorov's Theorem}[subsection]{Prohorov's Theorem}
\newtheorem{proposition}[subsection]{Proposition}
\newtheorem{corollary}[subsection]{Corollary}
\newtheorem{definition}[subsection]{Definition}
\newtheorem{exm}[subsection]{Example}
\newtheorem{rem}[subsection]{Remark}

\captionsetup[subfigure]{margin=0pt, parskip=0pt, hangindent=0pt, indention=0pt, labelformat=parens, labelfont=rm}

\numberwithin{equation}{section}




\begin{document} 

\begin{abstract}
   
This work is devoted to the study of families of infinite parabolic iterated function systems (PIFS) on a closed interval parametrized by vectors in $\mathbb{R}^d$ with overlaps. We show that the Hausdorff dimension and absolute continuity of ergodic projections through a family of infinite PIFS are decided \emph{a.e.} by the growth rate of the entropy and Lyapunov exponents of the families of truncated PIFS with respect to the concentrating measures, under transversality of the family essentially. We also give an estimation on the upper bound of the Hausdorff dimension of parameters where the corresponding ergodic projections admit certain dimension drop. The setwise topology on the space of finite measures enables us to approximate a family of infinite systems by families of their finite truncated sub-systems, which plays the key role throughout our work.                
\end{abstract}
 
 \maketitle

\section{Introduction}\label{sec1}

This work should be understood in the background of families of overlapping iterated function systems.  A typical family of overlapping \emph{iterated function system} (IFS) on a compact metric space consists of a flow of finitely many strictly contractive endomorphisms on the space parametrized by some (time) parameter. The dimension of attractors and measures supported on them (especially the invariant or ergodic ones) is the focus in the theory of families of IFS. Besides calculating the dimension of attractors and measures supported on the attractors, another important problem is to decide whether the projective measures on the attractors from measures on the symbolic spaces are singular or absolutely continuous with respect to the Lebesgue measure, refer to \cite{BRS, Hoc1, Hoc3, PSS, Shm2, Shm3, SS1, SSS}. See also \cite{Fur} by Furstenberg, \cite{Hoc4} by Hochman, \cite{PS1} by Peres-Solomyak and \cite{Shm4} by Shmerkin for more interesting questions on families of overlapping IFS. These problems are already very difficult in the case of Bernoulli convolutions, see for example \cite{BV1, BV2, Hoc1, LPS, Shm1, Sol3, SS2, Var1, Var2, Var3} for recent progress on the topic.

Note that most of the above research are done on families of finite IFS, that is, every individual IFS in the family is constituted by finitely many maps. In this work we try to deal with families of infinite IFS. We choose the families of parabolic iterated function systems investigated by K. Simon, B. Solomyak and M. Urba\'nski in \cite{SSU1, SSU2} to demonstrate our ideas, which are probably applicable to some other families of infinite IFS. Our technique here is to utilize the sequence of concentrating measures on the attractors of families of finite sub-systems to approximate the measures on the attractors of families of infinite systems. 

Let $X\subset \mathbb{R}$ be a closed interval with its Borel $\sigma$-algebra $\mathcal{B}$. Let $\mathcal{\hat{M}}(X)$ be the collection of all the finite Borel measures on $(X, \mathcal{B})$. For a set $A\subset\mathbb{R}^d$ with $d\in\mathbb{N}$, let $HD(A)$ be its Hausdorff dimension \cite{Fal2}.

\begin{definition}\label{def3}
For a measure $\nu\in\mathcal{\hat{M}}(X)$,  the \emph{lower} and \emph{upper Hausdorff dimension} of $\nu$ are defined respectively to be:

\begin{center}
$dim_* \nu=\inf\{HD(A): \nu(A)>0, A\in \mathcal{B}\},$  
\end{center}
and
\begin{center}
$dim^* \nu=\inf\{HD(A): \nu(A)=\nu(X), A\in \mathcal{B}\}.$
\end{center}
\end{definition} 

The two values indicate the distribution of a measure on $X$ from two complementary points of views, with the possibility of the lower one being strictly less than the upper one. However, they coincide with each other for ergodic measures with respect to transformations of proper regularity (for example the $C^1$ diffeomorphisms) on $X$. See for example \cite{HS, SimS2,  You} on calculation of the Hausdorff dimension of measures in various circumstances. Following P. Mattila, M. Mor\'an and J. M. Rey \cite{MMR}, the author discovers some semi-continuity of the measure-dimension mappings $dim_*$ and $dim^*$ on $\mathcal{\hat{M}}(X)$ equipped with the setwise topology \cite{FKZ, Las}, which is crucial in our approximating process.     
     
Now we gradually introduce the families of parabolic iterated function systems. For $\theta\in (0,1]$,  a point $v\in X$ is called an \emph{indifferent point} of a $C^{1+\theta}$ map $s: X\rightarrow X$ if 
\begin{center}
$|s'(v)|=1$.
\end{center}

\begin{definition}
A $C^{1+\theta}$ map $s: X\rightarrow X$ is called \emph{parabolic} it satisfies the following conditions:
\begin{itemize}
\item $s$ has only one indifferent point $v$ and it is fixed, that is, $s(v)=v$.
\item $s$ is \emph{contractive} on $X\setminus\{v\}$, that is, $0<|s'(x)|<1$ for any $x\in X\setminus\{v\}$. 
\item $s$ is \emph{well-behaved} around $v$, that is, $s'(x)$ is monotone on each component of $X\setminus\{v\}$.
\item There exist $L_1\geq 1$ and $\beta<\cfrac{\theta}{1-\theta}$ such that
\begin{center}
$\cfrac{1}{L_1}\leq\liminf_{x\rightarrow v}\cfrac{|s'(x)-s'(v)|}{|x-v|^\beta}\leq\limsup_{x\rightarrow v}\cfrac{|s'(x)-s'(v)|}{|x-v|^\beta}\leq L_1$.
\end{center}
\end{itemize} 
\end{definition}

It is called \emph{hyperbolic} if $0<|s'(x)|<1$ for any $x\in X$. Parabolic maps with finitely many indifferent points can be handled in our context, with some technical modifications on the proofs.

\begin{definition}\label{def8}
For $\theta\in (0,1]$ and a countable index set $I$ (with at least two elements), a \emph{parabolic iterated function system} (PIFS) is a collection of $C^{1+\theta}$ maps
\begin{center}
$S=\{s_i: X\rightarrow X\}_{i\in I}$
\end{center}
satisfying the following conditions:

\begin{itemize}
\item There is one and only one $i_1\in I$ such that the map $s_{i_1}$ is a parabolic map with its indifferent point $v$.\\

\item The other maps $\{s_i\}_{i\in I\setminus\{i_1\}}$ are all hyperbolic.

\end{itemize}
\end{definition}

The collection of PIFS $S=\{s_i: X\rightarrow X\}_{i\in I}$ satisfying
\begin{center}
$\cup_{i\in I\setminus\{i_1\}} s_i(X)\subset X^o\setminus\{v\}$
\end{center} 
is termed $\Gamma_X(\theta)$. For an integer $d\in\mathbb{N}$ and an open set $U\subset\mathbb{R}^d$, we focus on families of PIFS instead of a single system  in $\Gamma_X(\theta)$ parametrized by the vector-time parameter $\bold{t}\in U$. The parabolic map is always fixed at any time in the family. Since families of finite PIFS (that is, $\#I<\infty$) has been considered by Simon-Solomyak-Urba\'nski, we focus on the families of infinite PIFS ($\#I=\infty$) in the following. That is, we focus on families of PIFS 
\begin{center}
$S^{\bold{t}}=\{s_i^{\bold{t}}: X\rightarrow X\}_{i\in \mathbb{N}}$,
\end{center} 
such that  $S^{\bold{t}}\in\Gamma_X(\theta)$ with the parabolic map $s_1^{\bold{t}}=s_1$ remaining the same at any time $\bold{t}\in U$. Let 
\begin{center}
$\pi_{\bold{t}}: I^\infty\rightarrow X$
\end{center}
be the projection map and $J_{\bold{t}}=\pi_{\bold{t}}(I^\infty)$ be the attractor (\cite{Hut}) at time $\bold{t}\in U$. To achieve some reasonable conclusions on the parametrized family, obviously some control on the dependence of the family with respect to $\bold{t}$ is necessary. A family of PIFS $S^{\bold{t}}=\{s_i^{\bold{t}}: X\rightarrow X\}_{i\in \mathbb{N}}$ is said to satisfy the \emph{continuity condition} if for any fixed $i\in \mathbb{N}$, the map $s_i^{\bold{t}}$ depends continuously on the parameter $\bold{t}\in U$ in the Banach space $C^{1+\theta}$ equipped with the supreme norm. Let $\mathfrak{L}^d$ be the Lebesgue measure on $\mathbb{R}^d$ for $d\in\mathbb{N}$. The family is said to satisfy the \emph{transversality condition}  if there exists a constant $C_1$ such that
\begin{center}
$\mathfrak{L}^d\{\bold{t}\in U: |\pi_\bold{t}(\omega)-\pi_\bold{t}(\tau)|\leq r\}\leq C_1r$
\end{center}      
for any $\omega, \tau\in \mathbb{N}^\infty, \omega_1\neq \tau_1$ and any $r>0$.

While the continuity condition comes more naturally, the transversality condition has been recognized to be an effective condition to achieve some uniform conclusions in various contexts of flows of dynamical systems. For example, see  \cite{Fal1, PolS, PS2, PS3, SimS1, Sol1, Sol2}.  

The following are some classical notions on the symbolic dynamics on $\mathbb{N}^\infty$. For an infinite word $\omega=\omega_1 \omega_2 \cdots\in \mathbb{N}^\infty$, let
\begin{center}
$\sigma(\omega)=\omega_2 \omega_3 \cdots$
\end{center}
be the shift map. For a finite $k$-word $\tau\in \mathbb{N}^k$, let 
\begin{center}
$[\tau]=\{\omega\in \mathbb{N}^\infty: \omega|_k=\tau\}$
\end{center} 
be a \emph{cylinder set}, in which $\omega|_k=\omega_1\omega_2\cdots\omega_k$ is the $k$-th restriction of the infinite word $\omega$. Let
\begin{center}
$\mathbb{N}^*=\cup_{k=1}^\infty \mathbb{N}^k$
\end{center}
be the collection of all the finite words. Now consider the $\sigma$-algebra $\mathcal{B}_{\mathbb{N}^\infty}$ generated by all the cylinder sets on $\mathbb{N}^\infty$.  Let $h_\mu(\sigma)$ be the entropy of the shift map $\sigma$  with respect to the partition of $\mathbb{N}^\infty$ by cylinder sets $\{[i]\}_{i=1}^\infty$ for a measure $\mu\in\mathcal{\hat{M}}(\mathbb{N}^\infty)$ (\cite{Hoc2, Wal}).

One can treat $\mu$ as the law of an infinite discrete-time \emph{stochastic process} 
\begin{center}
$\bold{Y}=\{Y_i\}_{i\in\mathbb{N}}$,
\end{center}
in which $Y_i$ is a discrete random variable on $\mathbb{N}$ for $i\in\mathbb{N}$ under the natural projection with law $\mu(\mathbb{N}\times \mathbb{N}\times\cdots\times Y_i\times \mathbb{N}\times\cdots)$. The sequence of infinite random variables is called \emph{independent} if the finite variables 
\begin{center}
$\{Y_1, Y_2, \cdots, Y_n\}$
\end{center}
are independent for any $n\in\mathbb{N}$, see \cite[Definition 3a(1),(b)]{Tsi}. The stochastic process $\bold{Y}$ is a \emph{Markov process} under the hypothesis of independence, refer to \cite{BHP, FKZ, Hai, HL}. 

In the following we focus on the measures on $J_{\bold{t}}$ projected from probability measures on $\mathbb{N}^\infty$ through $\pi_{\bold{t}}$ for $\bold{t}\in U$.  For a measure $\mu$ on the symbolic space $\mathbb{N}^\infty$, consider its projection under $\pi_{\bold{t}}$:
\begin{center}
$\nu_{\bold{t}}=\mu\circ\pi_{\bold{t}}^{-1}$
\end{center}
for any $\bold{t}\in U$. We are particularly interested in the cases when $\mu$ is invariant or ergodic with respect to the shift map $\sigma$ on $\mathbb{N}^\infty$. If $\mu$ is invariant then  $\nu_{\bold{t}}$ is of pure type, see for example \cite{JW}. Let
\begin{center}
$\lambda^\bold{t}_\mu(\sigma)=-\int_{\mathbb{N}^\infty} \log |s_{\omega_1}'(\pi_{\bold{t}}\circ\sigma(\omega))|d\mu(\omega)$ 
\end{center}
be the average \emph{Lyapunov exponent} of the family of PIFS $S^{\bold{t}}=\{s_i^{\bold{t}}: X\rightarrow X\}_{i\in \mathbb{N}}$ at time $\bold{t}\in U$. 

Our first main result deals with the two basic questions regarding the projective measures $\{\nu_{\bold{t}}\}_{\bold{t}\in U}$, these are, deciding their dimensions $dim_* \nu_{\bold{t}}, dim^* \nu_{\bold{t}}$ and whether they are absolutely continuous or singular with respect to the Lebesgue measure for $\bold{t}\in U$. Let $\mathbb{N}_n=\{1,2,\cdots,n\}$ be the $n$-th truncation of $\mathbb{N}$ for $n\in \mathbb{N}$. We leave the technical concept-the $n$-th concentrating measure $\mu_n$ (supported on $\mathbb{N}_n^\infty$) of a finite measure $\mu$ (supported on $\mathbb{N}^\infty$) to Section \ref{sec4}. The following result generalizes \cite[Theorem 2.3]{SSU1} from families of finite PIFS to families of infinite PIFS. 

\begin{theorem}\label{thm17}
Let 
\begin{center}
$\big\{S^\bold{t}=\{s_i^{\bold{t}}: X\rightarrow X\}_{i\in \mathbb{N}}\in \Gamma_X(\theta)\big\}_{\bold{t}\in U}$
\end{center}
be a family of infinite parabolic iterated function systems satisfying the continuity and transversality condition with respect to the vector-time parameter $\bold{t}\in U$.  For an ergodic probability measure $\mu$ on the symbolic space $\mathbb{N}^\infty$ with positive entropy $h_\mu(\sigma)$, let $\mu_n$ be its $n$-th concentrating measure for any $n\in\mathbb{N}$. If the sequence of infinite random variables in $\bold{Y}$ under the law $\mu$ is independent and $\mu(\cup_{n=1}^\infty \mathbb{N}_n^\infty)=1$, then 
\begin{enumerate}[(i).]
\item For Lebesgue \emph{a.e.} $\bold{t}\in U$, $dim_* \nu_\bold{t}=dim^* \nu_\bold{t}=\lim_{n\rightarrow\infty}\min\Big\{ \cfrac{h_{\mu_n}(\sigma)}{\lambda^{\bold{t}}_{\mu_n}(\sigma)},1\Big\}$,

\item $\nu_\bold{t}$ is absolutely continuous for Lebesgue \emph{a.e.} $\bold{t}\in \Big\{\bold{t}: \limsup_{n\rightarrow\infty}\big\{\cfrac{h_{\mu_n}(\sigma)}{\lambda^{\bold{t}}_{\mu_n}(\sigma)}\big\}>1\Big\}$,
\end{enumerate}
in which $\nu_\bold{t}=\mu\circ\pi_\bold{t}^{-1}$ is the  projective measure at time $\bold{t}\in U$.
\end{theorem}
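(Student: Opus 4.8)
The plan is to squeeze the infinite system between its finite truncations and to push the Simon--Solomyak--Urba\'nski theorem \cite[Theorem 2.3]{SSU1} through the setwise limit furnished by the concentrating measures.

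First I would fix $n$ and treat the truncated collection $\{s_i^{\bold t}\}_{i\in\mathbb{N}_n}$ as a finite system in $\Gamma_X(\theta)$: since $1\in\mathbb{N}_n$ the parabolic generator $s_1$ is retained, and the transversality inequality for the infinite family restricts verbatim to words $\omega,\tau\in\mathbb{N}_n^\infty$ with $\omega_1\neq\tau_1$, so each truncation again obeys the continuity and transversality conditions with the same constant $C_1$. Writing $\nu_{\bold t,n}=\mu_n\circ\pi_{\bold t}^{-1}$ for the projection of the $n$-th concentrating measure, I would apply \cite[Theorem 2.3]{SSU1} to every truncation. This yields, for each $n$, a conull set $U_n\subset U$ on which $dim_*\nu_{\bold t,n}=dim^*\nu_{\bold t,n}=\min\{h_{\mu_n}(\sigma)/\lambda^{\bold t}_{\mu_n}(\sigma),1\}$ and on which $\nu_{\bold t,n}$ is absolutely continuous whenever that ratio exceeds $1$; intersecting over $n$ gives a single conull $U_\infty=\bigcap_n U_n$ on which all truncated conclusions hold at once. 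Here the ergodicity of $\mu_n$ is what produces the coincidence $dim_*=dim^*$ at each finite level, while the independence of $\bold Y$ is what lets me evaluate $h_{\mu_n}(\sigma)$ and $\lambda^{\bold t}_{\mu_n}(\sigma)$ from the one-dimensional marginals and track their monotonicity in $n$, so that $\lim_n\min\{h_{\mu_n}(\sigma)/\lambda^{\bold t}_{\mu_n}(\sigma),1\}$ exists.

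The hypothesis $\mu(\cup_n\mathbb{N}_n^\infty)=1$ forces $\mu(\mathbb{N}_n^\infty)\uparrow 1$, and through the construction of $\mu_n$ in Section \ref{sec4} it gives setwise convergence $\mu_n\to\mu$ on $\mathbb{N}^\infty$; composing with the continuous projection then yields $\nu_{\bold t,n}\to\nu_{\bold t}$ setwise in $\mathcal{\hat{M}}(X)$. At this point I would invoke the semicontinuity of the measure-dimension maps $dim_*$ and $dim^*$ in the setwise topology (the adaptation of \cite{MMR} announced in the introduction). The lower bound $dim_*\nu_{\bold t}\geq\limsup_n dim_*\nu_{\bold t,n}$ is essentially immediate, since any Borel $A$ with $\nu_{\bold t}(A)>0$ satisfies $\nu_{\bold t,n}(A)>0$ for all large $n$ and hence $HD(A)\geq dim_*\nu_{\bold t,n}$; the matching upper bound $dim^*\nu_{\bold t}\leq\lim_n dim^*\nu_{\bold t,n}$ is the content of the semicontinuity result, realized concretely by covering a $\nu_{\bold t}$-full set with the union of the full sets of the $\nu_{\bold t,n}$. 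Together with $dim_*\leq dim^*$ and the existence of the limit, these inequalities collapse all three quantities onto $\lim_n\min\{h_{\mu_n}(\sigma)/\lambda^{\bold t}_{\mu_n}(\sigma),1\}$, which is (i).

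For (ii), on the set where $\limsup_n h_{\mu_n}(\sigma)/\lambda^{\bold t}_{\mu_n}(\sigma)>1$ I would pass to a subsequence $n_k$ along which each finite ratio exceeds $1$, so that $\nu_{\bold t,n_k}$ is absolutely continuous for a.e. such $\bold t$ by the finite theorem. Since a setwise limit of absolutely continuous measures need not be absolutely continuous, the real work is to upgrade the finite statement to a uniform bound $\int_U\|\nu_{\bold t,n_k}\|_{L^2}^2\,d\bold t\leq C$, which the transversality estimate supplies in the super-critical regime, and then to extract a weak-$L^2$ limit density and identify it with $\nu_{\bold t}$. I expect this last passage to be the main obstacle: one must keep the exceptional $\bold t$-null set independent of $k$ while controlling, uniformly in the truncation level, the contribution of the parabolic map $s_1$ near its indifferent point $v$ to the energy integral, and it is precisely here that the infinitude of the system and the degeneracy $|s_1'(v)|=1$ are felt simultaneously.
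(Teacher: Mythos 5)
Your part (i) is essentially the paper's own argument --- truncate, apply \cite[Theorem 2.3]{SSU1} to each finite family $\{s_i^{\bold{t}}\}_{i\in\mathbb{N}_n}$ with the concentrating measure $\mu_n$, pass to the setwise limit $\nu_{n,\bold{t}}\stackrel{s}{\rightarrow}\nu_{\bold{t}}$, and squeeze via the two semicontinuity inequalities --- and the squeeze is sound; your use of the trivial inequality $dim_*\nu_{\bold{t}}\leq dim^*\nu_{\bold{t}}$ in place of the paper's Corollary \ref{cor5} is harmless, since the chain $\limsup_n\min\{\cdot\}\leq dim_*\nu_{\bold{t}}\leq dim^*\nu_{\bold{t}}\leq\liminf_n\min\{\cdot\}$ closes either way and even yields the existence of the limit as an output. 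But two inputs are left hanging. Applying the finite theorem requires $\mu_n$ to be ergodic with positive entropy, and ergodicity of a concentrating measure of an ergodic $\mu$ is \emph{not} automatic: this is exactly where the independence hypothesis is spent in the paper (Lemma \ref{lem16}, via Kolmogorov's $0$-$1$ law; positivity of $h_{\mu_n}(\sigma)$ for large $n$ is Lemma \ref{lem12}). You instead assert ergodicity of $\mu_n$ without proof and spend independence on a claimed monotonicity in $n$ of $h_{\mu_n}(\sigma)/\lambda^{\bold{t}}_{\mu_n}(\sigma)$, which is neither established nor needed.

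The genuine gap is in part (ii). Your premise that ``a setwise limit of absolutely continuous measures need not be absolutely continuous'' is false; it is the \emph{weak} limit for which this fails (the paper's Example \ref{exm6}). Under setwise convergence the implication is immediate: if $\mathfrak{L}^1(A)=0$, then $\nu_{n_k,\bold{t}}(A)=0$ for every $k$, and setwise convergence forces $\nu_{\bold{t}}(A)=\lim_{k\rightarrow\infty}\nu_{n_k,\bold{t}}(A)=0$. This is precisely the paper's Proposition \ref{thm14} (applied through Lemma \ref{lem18}), and it is the entire reason the setwise topology, rather than the weak one, is used here. Consequently the uniform bound $\int_U\Vert\nu_{n_k,\bold{t}}\Vert_{L^2}^2\,d\bold{t}\leq C$, the weak-$L^2$ extraction, and the uniform control of the parabolic point across truncation levels --- the step you flag as the main obstacle and do not carry out --- are unnecessary; pursuing them would reintroduce exactly the non-uniformity ($\gamma_n\rightarrow 1$, $u_n\rightarrow 0$, $M_n\rightarrow\infty$, $V_n$ shrinking to $\{v\}$) that the approximation method is designed to bypass, so as written your proof of (ii) is incomplete. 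What does require care is the null-set bookkeeping, since your subsequence $n_k$ depends on $\bold{t}$: for each fixed $n$ take the null exceptional set $E_n$ given by the finite theorem on $\{\bold{t}: h_{\mu_n}(\sigma)/\lambda^{\bold{t}}_{\mu_n}(\sigma)>1\}$, discard the single null set $\cup_{n}E_n$, and only then run the $\bold{t}$-dependent subsequence argument; this is what the paper's set $U_a$ accomplishes.
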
 

\begin{rem}\label{rem3}
We are not sure whether the the hypothesis of independence of the sequence of infinite random variables under the law $\mu$ can be removed or not, although it seems so alluring to us.  
\end{rem} 

\begin{rem}\label{rem2}
As to the approximation of $\mu$, better results are possible if one does the approximation by the sequence of restrictions $\{\mu|_{\mathbb{N}_n^\infty}\}_{n=1}^\infty$ instead of the sequence of concentrating measures $\{\mu_n\}_{n=1}^\infty$. However, the restrictions are not probabilities, while the concentrating measures take the advantage that they are all probability measures.
\end{rem} 

\begin{rem}
Similar to \cite[Theorem 2.3]{SSU1}, we can only guarantee the result holds \emph{a.e.} instead of everywhere in Theorem \ref{thm17}, as it is still impossible to rule out the exact overlapping between images of $\{s^{\bold{t}}_{\omega}\}_{\omega\in \mathbb{N}^*}$ for us for fixed $\bold{t}\in U$. There is an important conjecture that a dimension drop implies exact overlapping, which has been confirmed in the case of finite Bernoulli convolutions, see for example \cite{Hoc1, Var3}.
\end{rem} 

Note that the proof of \cite[Theorem 2.3]{SSU1} does not apply to Theorem \ref{thm17}. There are two main obstacles on application of Simon-Solomyak-Urba\'nski's techniques to the families of infinite PIFS. The first one is that  $h_\mu(\sigma)$ can explode in case of $\#I=\infty$, see \cite{Hoc2}.  If $h_\mu(\sigma)=\infty$ for some ergodic $\mu\in\mathcal{\hat{M}}(\mathbb{N}^\infty)$ with respect to the shift map, the Shannon-McMillan-Breiman Theorem is not known to hold in this case \cite{Hay}. The second one is the regularity of the families of infinite PIFS. The collection of PIFS $S=\{s_i: X\rightarrow X\}_{i\in I}$ satisfying the following conditions is called $\Gamma_X(\theta, V, \gamma, u, M)$ in \cite{SSU1}. 

\begin{enumerate}

\item There exists an open connected neighbourhood $V$ of $v$, such that
\begin{center}
$\cup_{i\in I\setminus\{i_1\}} s_i(X)\cap V=\emptyset$. 
\end{center}  

\item There exists some $\gamma\in (0,1)$, such that 
\begin{center}
$\sup_{i\in I\setminus\{i_1\}}\{\Vert s_i'\Vert\}\leq \gamma$. 
\end{center}

\item There exists some $u\in (0,1)$, such that 
\begin{center}
$\inf_{i\in I, x\in X}\{|s_i'(x)|\}\geq u$. 
\end{center}

\item There exists $M>0$ such that
\begin{center}
$\Vert S'\Vert_\theta=\sup_{i\in I}\sup_{x,y\in X}\Big\{\cfrac{|s_i'(x)-s_i'(y)|}{|x-y|^\theta}\Big\}\leq M$.
\end{center}

\end{enumerate}

For a finite PIFS, $S\in\Gamma_X(\theta)$ is enough to guarantee $S\in\Gamma_X(\theta, V, \gamma, u, M)$ for some parameters  $V, \gamma, u, M$. However, this is not true for an infinite PIFS. Our technique of approximating measures on the infinite symbolic space by the sequence of concentrating measures on its finite subspaces successfully overcomes these obstacles. 

Our next main result deals with local dimension of the exceptional parameters of a family of infinite PIFS $\{S^\bold{t}\}_{\bold{t}\in U}$. Let $\mu$ be a measure on the symbolic space $\mathbb{N}^\infty$ with its concentrating measures $\{\mu_n\}_{n\in\mathbb{N}}$. For a subset $G\subset U$ and $0<\alpha<1$, in case the sequence $\Big\{\cfrac{h_{\mu_n}(\sigma)}{\lambda^{\bold{t}}_{\mu_n}(\sigma)}\Big\}_{n=1}^\infty$ admits a limit for any $\bold{t}\in G$,  let
\begin{center}
$E_{\alpha, G}:=\Big\{\bold{t}\in G: dim^* \nu_\bold{t}<\lim_{n\rightarrow\infty}\min\big\{ \cfrac{h_{\mu_n}(\sigma)}{\lambda^{\bold{t}}_{\mu_n}(\sigma)},\alpha\big\}\Big\}$ 
\end{center}
be the level-$\alpha$ exceptional parameters in $G$. Let 
\begin{center}
$K_{\alpha, G}=\min\Big\{\sup_{\bold{t}\in G} \lim_{n\rightarrow\infty}\cfrac{h_{\mu_n}(\sigma)}{\lambda^{\bold{t}}_{\mu_n}(\sigma)}, \alpha\Big\}+d-1$.
\end{center}
For a parameter set $A\subset U\subset \mathbb{R}^d$, let $N_r(A)$ be the least number of open balls of radius $r>0$ needed to cover the set $A$. A family of PIFS $S^{\bold{t}}=\{s_i^{\bold{t}}: X\rightarrow X\}_{i\in \mathbb{N}}$ is said to satisfy the \emph{strong transversality condition}  if there exists a constant $C_2>0$ such that
\begin{center}
$N_r\{\bold{t}\in U: |\pi_\bold{t}(\omega)-\pi_\bold{t}(\tau)|\leq r\}\leq C_2r^{1-d}$
\end{center}      
for any $\omega, \tau\in \mathbb{N}^\infty, \omega_1\neq \tau_1$ and any $r>0$. Obviously this condition is stronger than the transversality condition. We have the following estimation on the upper bound of the Hausdorff dimension of the local level-$\alpha$ exceptional set. 
 
\begin{theorem}\label{thm12}
Let 
\begin{center}
$\big\{S^\bold{t}=\{s_i^{\bold{t}}: X\rightarrow X\}_{i\in \mathbb{N}}\in \Gamma_X(\theta)\big\}_{\bold{t}\in U}$
\end{center}
be a family of infinite parabolic iterated function systems satisfying the continuity and strong transversality condition with respect to the vector-time parameter $\bold{t}\in U$. For an ergodic probability measure $\mu$ on the symbolic space $\mathbb{N}^\infty$ with $\mu(\cup_{n=1}^\infty \mathbb{N}_n^\infty)=1$, if the sequence of infinite random variables in $\bold{Y}$ under the law $\mu$ is independent, then 
\begin{equation}\label{eq38}
HD(E_{\alpha, G})\leq K_{\alpha, G}
\end{equation}
for any $0<\alpha<1$ and $G\subset U$ such that $\lim_{n\rightarrow\infty} \cfrac{h_{\mu_n}(\sigma)}{\lambda^{\bold{t}}_{\mu_n}(\sigma)}$ exists for any $\bold{t}\in G$, in which $\{\mu_n\}_{n\in\mathbb{N}}$ are the concentrating measures.
\end{theorem}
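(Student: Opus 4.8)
The plan is to deduce the bound $HD(E_{\alpha,G})\le K_{\alpha,G}$ from the stronger claim that $\mathcal{H}^{\gamma}(E_{\alpha,G})=0$ for every $\gamma>K_{\alpha,G}$. First I would peel off the $\mathbf{t}$-dependence hidden in the threshold by writing $E_{\alpha,G}=\bigcup_{q\in\mathbb{Q},\,q>0}A_q$, where
\begin{center}
$A_q=\Big\{\mathbf{t}\in G:\ dim^*\nu_{\mathbf{t}}<q\le\min\big\{\lim_{n\rightarrow\infty}\tfrac{h_{\mu_n}(\sigma)}{\lambda^{\mathbf{t}}_{\mu_n}(\sigma)},\alpha\big\}\Big\}.$
\end{center}
Indeed, if $\mathbf{t}\in E_{\alpha,G}$ then any rational $q$ with $dim^*\nu_{\mathbf{t}}<q\le\min\{\lim_n h_{\mu_n}/\lambda^{\mathbf{t}}_{\mu_n},\alpha\}$ places $\mathbf{t}$ in $A_q$. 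Since Hausdorff dimension is stable under countable unions and every nonempty $A_q$ forces $q\le\min\{\sup_{\mathbf{t}\in G}\lim_n h_{\mu_n}/\lambda^{\mathbf{t}}_{\mu_n},\alpha\}=K_{\alpha,G}-(d-1)$, it suffices to prove $HD(A_q)\le q+d-1$ for each fixed $q$. Arguing by contradiction, suppose $\mathcal{H}^{\gamma}(A_q)>0$ for some $\gamma>q+d-1$; by Frostman's lemma fix a compactly supported probability measure $\rho$ on $A_q$ with $\rho(B(\mathbf{t},r))\le Cr^{\gamma}$ for all $\mathbf{t}$ and $r>0$.

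The core is an averaged energy estimate carried out on the \emph{finite} truncated subsystems, which is forced upon us because $h_\mu(\sigma)$ may be infinite and an infinite PIFS need not lie in any $\Gamma_X(\theta,V,\gamma,u,M)$. Fix $s<q$ (to be sent to $q$) with also $s<\gamma-(d-1)$, and work with the $n$-th concentrating measure $\mu_n$ and its projection $\nu^{(n)}_{\mathbf{t}}=\mu_n\circ\pi_{\mathbf{t}}^{-1}$; the associated truncated subsystem does belong to $\Gamma_X(\theta,V,\gamma,u,M)$ for suitable parameters and has finite entropy and Lyapunov exponent. Writing $I_s(\nu)=\iint|x-y|^{-s}\,d\nu(x)\,d\nu(y)$, by Fubini one has
\begin{center}
$\int I_s(\nu^{(n)}_{\mathbf{t}})\,d\rho(\mathbf{t})=\iint\Big(\int|\pi_{\mathbf{t}}(\omega)-\pi_{\mathbf{t}}(\tau)|^{-s}\,d\rho(\mathbf{t})\Big)\,d\mu_n(\omega)\,d\mu_n(\tau),$
\end{center}
and I would split the $\mu_n\times\mu_n$ integral by the length $k$ of the longest common prefix $u$ of $\omega$ and $\tau$. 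For the off-diagonal part (where the symbols first differ) the strong transversality condition covers $\{\mathbf{t}:|\pi_{\mathbf{t}}(\omega)-\pi_{\mathbf{t}}(\tau)|\le r\}$ by $C_2r^{1-d}$ balls of radius $r$, so its $\rho$-mass is $\lesssim r^{\gamma+1-d}$, and a layer-cake integration gives the uniform bound $\int|\pi_{\mathbf{t}}(\omega)-\pi_{\mathbf{t}}(\tau)|^{-s}\,d\rho(\mathbf{t})\lesssim 1$ precisely when $s<\gamma-(d-1)$. Pulling out the contraction $\Vert(s^{\mathbf{t}}_u)'\Vert^{-s}$ of the prefix $u$ by bounded distortion and summing over $k$, the resulting series is dominated by terms of size $e^{k(s\lambda^{\mathbf{t}}_{\mu_n}(\sigma)-h_{\mu_n}(\sigma))}$, which converge because $s<q\le h_{\mu_n}(\sigma)/\lambda^{\mathbf{t}}_{\mu_n}(\sigma)$ on $A_q$ for all large $n$. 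Hence $\int I_s(\nu^{(n)}_{\mathbf{t}})\,d\rho(\mathbf{t})$ stays bounded as $n\rightarrow\infty$.

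It remains to transport this to the infinite system. Since the concentrating measures satisfy $\mu_n\rightarrow\mu$ in the setwise topology and each $\pi_{\mathbf{t}}$ is continuous, $\nu^{(n)}_{\mathbf{t}}\rightarrow\nu_{\mathbf{t}}$ weakly; because the Riesz kernel $|x-y|^{-s}$ is nonnegative and lower semicontinuous, the $s$-energy is lower semicontinuous, so $I_s(\nu_{\mathbf{t}})\le\liminf_{n}I_s(\nu^{(n)}_{\mathbf{t}})$. Fatou's lemma together with the uniform bound above then yields $\int I_s(\nu_{\mathbf{t}})\,d\rho(\mathbf{t})<\infty$, so $I_s(\nu_{\mathbf{t}})<\infty$ for $\rho$-a.e.\ $\mathbf{t}$. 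Finite $s$-energy forces the lower local dimension of $\nu_{\mathbf{t}}$ to be at least $s$ almost everywhere, so $dim_*\nu_{\mathbf{t}}\ge s$ and hence $dim^*\nu_{\mathbf{t}}\ge dim_*\nu_{\mathbf{t}}\ge s$ for $\rho$-a.e.\ $\mathbf{t}$. Letting $s\uparrow q$ gives $dim^*\nu_{\mathbf{t}}\ge q$ for $\rho$-a.e.\ $\mathbf{t}$, contradicting $dim^*\nu_{\mathbf{t}}<q$ on $A_q\supseteq\operatorname{supp}\rho$. This contradiction proves $HD(A_q)\le q+d-1$, and taking the supremum over $q$ gives \eqref{eq38}. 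In place of the lower semicontinuity of the energy one may equally invoke the semicontinuity of the map $dim^*$ on $\mathcal{\hat{M}}(X)$ under the setwise topology to pass the bound $dim^*\nu^{(n)}_{\mathbf{t}}\ge s$ to the limit.

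The hard part will be the interface between the infinite, overlapping, and parabolic features of the system. On the one hand, because $h_\mu(\sigma)$ can be infinite and uniform regularity fails for the whole infinite system, the energy estimate must be run on the $\mu_n$ and then survive the limit $n\rightarrow\infty$; the delicate point is to keep the near-diagonal series converging with constants controlled uniformly in both $n$ and $\mathbf{t}\in\operatorname{supp}\rho$, i.e.\ to guarantee $\inf_{\mathbf{t}\in\operatorname{supp}\rho}\liminf_n h_{\mu_n}(\sigma)/\lambda^{\mathbf{t}}_{\mu_n}(\sigma)>s$, which is where the hypotheses $\mu(\cup_n\mathbb{N}_n^\infty)=1$ and the existence of $\lim_n h_{\mu_n}/\lambda^{\mathbf{t}}_{\mu_n}$ on $G$ enter. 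On the other hand, the parabolic map $s_1$ contracts only sub-exponentially near its indifferent fixed point, so the estimate on $\Vert(s^{\mathbf{t}}_u)'\Vert$ for prefixes $u$ with long runs of the symbol $1$ must be handled as in Simon--Solomyak--Urba\'nski, with $\lambda^{\mathbf{t}}_{\mu_n}(\sigma)$ serving as the averaged contraction rate; reconciling this sub-exponential contraction with the covering-number form of the strong transversality condition is the most technical ingredient of the proof.
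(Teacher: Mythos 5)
Your route is genuinely different from the paper's, even though both rest on the same truncate-and-approximate skeleton. The paper uses \cite[Theorem 5.3]{SSU1} as a black box: for each fixed $n$ the truncated family lies in some $\Gamma_X(\theta,V_n,\gamma_n,u_n,M_n)$, its concentrating measure $\mu_n$ is ergodic (Lemma \ref{lem16}), so that theorem gives $HD\big(\{\mathbf{t}\in G: dim^*\nu_{n,\mathbf{t}}<\min\{h_{\mu_n}(\sigma)/\lambda^{\mathbf{t}}_{\mu_n}(\sigma),\alpha\}\}\big)\le \min\{\sup_{\mathbf{t}\in G}h_{\mu_n}(\sigma)/\lambda^{\mathbf{t}}_{\mu_n}(\sigma),\alpha\}+d-1$; these \emph{dimension} bounds are then transferred to the infinite system via the convergence $dim^*\nu_{n,\mathbf{t}}\rightarrow dim^*\nu_{\mathbf{t}}$ (setwise convergence, both halves of Theorem \ref{thm13}, and $dim_*=dim^*$ for ergodic projections from Corollary \ref{cor5}) together with Lemma \ref{lem13}. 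You instead re-derive the Kaufman--Frostman energy argument --- which is essentially the proof of \cite[Theorem 5.3]{SSU1} --- for each truncated family, and transfer the bound at the level of \emph{energies}, via Fatou and lower semicontinuity of the Riesz energy; your rational decomposition $E_{\alpha,G}=\cup_{q}A_q$ plus countable stability of $HD$ plays the role of Lemma \ref{lem13}.

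The primary version of your transfer has a genuine gap at the sentence ``Hence $\int I_s(\nu^{(n)}_{\mathbf{t}})\,d\rho(\mathbf{t})$ stays bounded as $n\rightarrow\infty$.'' What the hypothesis $s<q\le \lim_n h_{\mu_n}(\sigma)/\lambda^{\mathbf{t}}_{\mu_n}(\sigma)$ controls is only the exponential ratio of the near-diagonal series; the multiplicative constants (bounded distortion, the Shannon--McMillan--Breiman/Egorov error sets, the treatment of prefixes with long parabolic blocks, and the crude bounds on atypical words which involve $u_n^{-ks}$) all depend on the regularity data of the $n$-th truncation, and the paper points out explicitly that these may degenerate: possibly $\sup_n\gamma_n=1$, $\inf_n u_n=0$, $\sup_n M_n=\infty$ and $\cap_n V_n=\{v\}$. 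So no uniform-in-$n$ energy bound is available in general, Fatou then yields nothing, and this obstruction is precisely why the paper transfers per-$n$ dimension bounds rather than uniform estimates. Your closing remark contains the repair, but with the semicontinuity quoted in the wrong direction: lower semicontinuity of $dim^*$ gives $\liminf_n dim^*\nu^{(n)}_{\mathbf{t}}\ge dim^*\nu_{\mathbf{t}}$, which cannot pass a lower bound on $dim^*\nu^{(n)}_{\mathbf{t}}$ to the limit. What works is the \emph{upper} semicontinuity of $dim_*$ in Theorem \ref{thm13}: for each fixed $n$ your energy estimate (now needed only for that $n$, so the degenerating constants are harmless, though you still need the threshold $h_{\mu_n}(\sigma)/\lambda^{\mathbf{t}}_{\mu_n}(\sigma)>s$ uniformly on $\operatorname{supp}\rho$, e.g.\ by an Egorov restriction of $\rho$) gives $I_s(\nu^{(n)}_{\mathbf{t}})<\infty$, hence $dim_*\nu^{(n)}_{\mathbf{t}}\ge s$, for $\rho$-a.e.\ $\mathbf{t}$; intersecting over $n$ and using $\limsup_n dim_*\nu^{(n)}_{\mathbf{t}}\le dim_*\nu_{\mathbf{t}}$ yields $dim^*\nu_{\mathbf{t}}\ge dim_*\nu_{\mathbf{t}}\ge s$ $\rho$-a.e., and $s\uparrow q$ gives the contradiction. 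With that correction your argument is sound, but the per-$n$ energy estimate you would be reproving is exactly \cite[Theorem 5.3]{SSU1}, which can simply be cited --- at which point your proof collapses into the paper's. A last, minor caveat: Frostman's lemma requires $A_q$ to be Borel or analytic, i.e.\ some measurability of $\mathbf{t}\mapsto dim^*\nu_{\mathbf{t}}$, whereas the paper's Lemma \ref{lem13} route needs no such hypothesis.
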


This result is a generalization of \cite[Theorem 5.3]{SSU1} from families of finite PIFS to families of infinite PIFS. See also \cite[Theorem]{Kau}.

\section{The setwise topology and semi-continuity of the measure-dimension mappings under it}

Since we are considering families of IFS in this work, it would be very helpful if the  measure-dimension mappings 
\begin{center}
$dim_*: \mathcal{\hat{M}}(X)\rightarrow[0,\infty)$ 
\end{center}
or
\begin{center}
 $dim^*: \mathcal{\hat{M}}(X)\rightarrow[0,\infty)$ 
\end{center}
admits some continuity property. To discuss the continuity problem, some appropriate topology on $\mathcal{\hat{M}}(X)$ is needed. Since the measures $\{\nu_\bold{t}\}_{\bold{t}\in U}\subset \mathcal{\hat{M}}(X)$ in Theorem  \ref{thm17} inherit the parametrization naturally from parametrization of the PIFS, one may try to endow the Euclidean metric on $\{\nu_\bold{t}\}_{\bold{t}\in U}$ through the parameter $\bold{t}\in U\subset\mathbb{R}^d$ to discuss the continuity of the measure-dimension mappings. The measure-dimension mappings are not always continuous under the Euclidean metric on $\{\nu_\bold{t}\}_{\bold{t}\in U}$ in general consideration, although lower semi-continuity of them can be expected. One can see from the family of Bernoulli convolutions $\{\nu_\lambda\}_\lambda\subset \mathcal{\hat{M}}([0,1])$ with the contraction ratio $\lambda\in(0,1)$. The measure-dimension mappings   
\begin{center}
$dim_*=dim^*: (0,1)\rightarrow[0,1]$ 
\end{center}
are not always continuous since there are dimension drops from $1$ at the inverses of the Pisot parameters. However, they are lower semi-continuous \cite[Theorem 1.8]{HS}. 

Since we are dealing with measures originated from families of the infinite systems and their finite sub-systems simultaneously in this work, we will abandon the  Euclidean metric on $\{\nu_\bold{t}\}_{\bold{t}\in U}\subset \mathcal{\hat{M}}(X)$ to discuss the continuity problem. A well-known topology on $\mathcal{\hat{M}}(X)$ is the weak topology.

\begin{definition}
A sequence of bounded measures $\{\nu_n\in\mathcal{\hat{M}}(X)\}_{n=1}^\infty$ is said to converge \emph{weakly} to $\nu\in\mathcal{\hat{M}}(X)$, if 
\begin{center}
$\lim_{n\rightarrow\infty }\int_X f(x) d\nu_n=\int_X f(x) d\nu$
\end{center}
for any bounded continuous $f: X\rightarrow\mathbb{R}$. 
\end{definition}

Denote the convergence in this sense by $\nu_n\stackrel{w}{\rightarrow}\nu$ as $n\rightarrow\infty$.  However, The measure-dimension mappings $dim^*$ and $dim_*$ do not admit any semi-continuity under the weak topology on $\mathcal{\hat{M}}(X)$ \cite[Theorem 3.1]{Ma}. It turns out that the setwise topology  is the most ideal topology for us to discuss the continuity of the measure-dimension mappings.

\begin{definition}
A sequence of measures $\{\nu_n\in\mathcal{\hat{M}}(X)\}_{n=1}^\infty$ is said to converge \emph{setwisely} to $\nu\in\mathcal{\hat{M}}(X)$, if 
\begin{center}
$\lim_{n\rightarrow\infty }\nu_n(B)=\nu(B)$
\end{center}
for any $B\in\mathcal{B}$. 
\end{definition}

One is recommended to refer to \cite{Doo, FKZ, GR, HL, Las, LY} for more equivalent descriptions of the setwise topology on $\mathcal{\hat{M}}(X)$ and its various applications. Denote the convergence in this sense by $\nu_n\stackrel{s}{\rightarrow}\nu$ as $n\rightarrow\infty$. The measure-dimension mappings        
$dim^*$ and $dim_*$ are lower semi-continuous and upper semi-continuous  under the setwise topology respectively on $\mathcal{\hat{M}}(X)$.

\begin{theorem}\label{thm13}
The measure-dimension mapping $dim^*$ is lower semi-continuous, while $dim_*$ is upper semi-continuous  under the setwise topology on $\mathcal{\hat{M}}(X)$, that is, if $\nu_n\stackrel{s}{\rightarrow}\nu$ in $\mathcal{\hat{M}}(X)$ as $n\rightarrow\infty$, then 
\begin{equation}
\liminf_{n\rightarrow\infty} dim^* \nu_n\geq dim^* \nu
\end{equation}
while
\begin{equation}
\limsup_{n\rightarrow\infty} dim_* \nu_n\leq dim_* \nu.
\end{equation}
\end{theorem}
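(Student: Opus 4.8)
The plan is to prove the two semi-continuity assertions separately, the statement for $dim^*$ being the substantial one. I would first dispose of the upper semi-continuity of $dim_*$, which is immediate from the fact that $dim_*$ is controlled by sets of \emph{positive} measure: given $\epsilon>0$, choose $A\in\mathcal{B}$ with $\nu(A)>0$ and $HD(A)\leq dim_*\nu+\epsilon$. Because $A$ is a fixed Borel set, setwise convergence forces $\nu_n(A)\to\nu(A)>0$, so $\nu_n(A)>0$ for all large $n$ and hence $dim_*\nu_n\leq HD(A)\leq dim_*\nu+\epsilon$. Letting $n\to\infty$ and then $\epsilon\to 0$ gives $\limsup_n dim_*\nu_n\leq dim_*\nu$. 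The only property used is that positivity of mass on a fixed test set survives setwise convergence.

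For the lower semi-continuity of $dim^*$ I would fix an arbitrary $s>\liminf_n dim^*\nu_n$ (the case $\liminf_n dim^*\nu_n=\infty$ being vacuous) and aim to manufacture a single Borel set of full $\nu$-measure and Hausdorff dimension at most $s$; this yields $dim^*\nu\leq s$ and then the claim. Since $\liminf_n dim^*\nu_n<s$, there is a subsequence $\{n_k\}$ with $dim^*\nu_{n_k}<s$, and for each $k$ a set $A_k\in\mathcal{B}$ with $\nu_{n_k}(A_k)=\nu_{n_k}(X)$ and $HD(A_k)<s$. The candidate full-measure set is $A:=\limsup_k A_k=\bigcap_m\bigcup_{k\geq m}A_k$. \textbf{The main obstacle} is that these full-measure sets $A_k$ \emph{change with} $k$, while setwise convergence only governs $\nu_n(B)$ for a \emph{fixed} Borel $B$; in particular a naive Borel--Cantelli bound on $\nu(X\setminus A)$ by $\sum_k\nu(X\setminus A_k)$ is useless, since each $\nu(X\setminus A_k)$ is a limit over the \emph{other} indices and need not be small despite $\nu_{n_k}(X\setminus A_k)=0$.

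The device that overcomes this is to feed setwise convergence the \emph{tail unions} instead of the individual sets. Writing $B_m:=\bigcup_{k\geq m}A_k$, one has $A_j\subseteq B_m$ for every $j\geq m$, so $\nu_{n_j}(B_m)\geq\nu_{n_j}(A_j)=\nu_{n_j}(X)$ and therefore $\nu_{n_j}(B_m)=\nu_{n_j}(X)$ for all $j\geq m$. Now $B_m$ is a fixed set, so letting $j\to\infty$ along the subsequence yields $\nu(B_m)=\lim_j\nu_{n_j}(B_m)=\lim_j\nu_{n_j}(X)=\nu(X)$; thus every $B_m$ has full $\nu$-measure, and continuity from above (legitimate since the measures are finite) gives $\nu(A)=\lim_m\nu(B_m)=\nu(X)$. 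For the dimension, countable stability of $HD$ gives $HD(B_m)=\sup_{k\geq m}HD(A_k)\leq s$, whence $HD(A)\leq s$ because $A\subseteq B_m$. So $A$ is a full-$\nu$-measure Borel set with $HD(A)\leq s$, giving $dim^*\nu\leq s$; letting $s\downarrow\liminf_n dim^*\nu_n$ finishes the proof. I would finally record the degenerate conventions (the zero measure, where $dim^*\nu=0$ and $dim_*\nu=+\infty$) under which both inequalities hold trivially, and note that finiteness of the measures in $\mathcal{\hat{M}}(X)$ is exactly what makes $\nu_n(X)\to\nu(X)$ and continuity from above available.
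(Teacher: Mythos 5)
Your proof is correct. It is worth noting that the paper never writes out a proof of this theorem at all: it appeals to the author's earlier results for probability measures ([Ma], Theorems 2.8 and 2.9) and simply remarks that those proofs carry over to $\mathcal{\hat{M}}(X)$; the only argument of this kind actually written out in the paper is for the signed-measure analogue (the proposition closing Section 2), and that one runs by contradiction: extract a subsequence witnessing the failure of semi-continuity, apply the Hahn decomposition to get a positive set $X^+$ of maximal measure, and then build the tail union $X^+_\infty=\cup_{i\geq N}(X^+\setminus X^+_i)$ of low-dimension pieces, to which setwise convergence is applied as a single \emph{fixed} Borel set. Your direct argument rests on exactly the same key mechanism --- feed setwise convergence the fixed tail unions $B_m=\cup_{k\geq m}A_k$ of the full-measure, low-dimension sets, then invoke countable stability of Hausdorff dimension and continuity from above for finite measures --- but your packaging is cleaner: it dispenses with the contradiction framework, needs no Hahn decomposition (irrelevant for nonnegative measures), and sidesteps the paper's delicate intermediate claim that one can choose sets $X^+_i$ with $\nu_{n_i}(X^+_i)=0$, $HD(X^+_i)=HD(X^+)$ and $HD(X^+\setminus X^+_i)<a$, which is the least transparent step of the paper's signed-measure proof. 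Your treatment of the $dim_*$ half (testing one near-optimal positive-measure set against setwise convergence) is the standard argument the paper alludes to, and your closing remark that finiteness of the measures is what licenses both $\nu_n(X)\to\nu(X)$ and continuity from above correctly identifies where the hypothesis $\nu\in\mathcal{\hat{M}}(X)$ enters. In short: correct, self-contained, and tighter than the sketch the paper relies on.
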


\begin{rem}
Similar results appear in \cite[Theorem 2.8, 2.9]{Ma} when one considers the measure-dimension mappings        
$dim^*$ and $dim_*$ on the probability space 
\begin{center}
$\mathcal{M}(X)=\{\nu: \nu(X)=1, \nu\in\mathcal{\hat{M}}(X)\}$
\end{center}
with $X$ being an arbitrary metric space. The proofs apply to the measure-dimension mappings on the space of finite measures $\mathcal{\hat{M}}(X)$ here, or even on the space of infinite measures. 
\end{rem}

Considering generalizations of measures,  Theorem \ref{thm13} is still true on the space of  \emph{finitely additive measures} (a finitely additive measure $\nu$ on $(X, \mathcal{B})$ is a function from $\mathcal{B}$ to $[0,\infty)$ satisfying finite additivity instead of $\sigma$-additivity), but not true for a \emph{signed measure} on $(X, \mathcal{B})$ under the Definition \ref{def3}. Obviously a new meterage for the dimension of signed measures is needed. Let $\mathcal{\hat{M}}_s(X)$ be the collection of all the finite signed Borel measures on $X$.
\begin{definition}
For a finite signed measure $\nu\in \mathcal{\hat{M}}_s(X)$,  the \emph{lower} and \emph{upper Hausdorff dimension} of $\nu$ are defined respectively to be:

\begin{center}
$dim_{s} \nu=\inf\{HD(A): \nu(A)>0 \mbox{ or } \nu(A)<0, A\in\mathcal{B}\},$  
\end{center}
and
\begin{center}
$dim^{s} \nu=\max\Big\{\inf\big\{HD(A): \nu(A)=\max\{\nu(B)\}_{B\in\mathcal{B}}\big\}, \inf\big\{HD(A): \nu(A)=\min\{\nu(B)\}_{B\in\mathcal{B}}\big\}\Big\}.$
\end{center}
\end{definition} 

Under this modified definition the measure-dimension mappings $dim_{s}$ and  $dim^{s}$  from $\mathcal{\hat{M}}_s(X)$ to $[0,\infty)$ still bear some continuity.

\begin{proposition}
The measure-dimension mapping $dim^s$ is lower semi-continuous, while $dim_s$ is upper semi-continuous  under the setwise topology on $\mathcal{\hat{M}}_s(X)$, that is, if $\nu_n\stackrel{s}{\rightarrow}\nu$ in $\mathcal{\hat{M}}_s(X)$ as $n\rightarrow\infty$, then 
\begin{equation}\label{eq45}
\liminf_{n\rightarrow\infty} dim^s \nu_n\geq dim^s \nu
\end{equation}
while
\begin{equation}
\limsup_{n\rightarrow\infty} dim_s \nu_n\leq dim_s \nu.
\end{equation}
\end{proposition}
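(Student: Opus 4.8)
The plan is to reduce the two signed functionals to the unsigned ones of Theorem~\ref{thm13} through the Hahn--Jordan decomposition. Write the Jordan decomposition $\nu=\nu^+-\nu^-$ of the limit, carried by a Hahn decomposition $X=P\sqcup N$, so that $\max\{\nu(B)\}_{B\in\mathcal{B}}=\nu^+(X)$ and $\min\{\nu(B)\}_{B\in\mathcal{B}}=-\nu^-(X)$. The first step is to establish the identities
\begin{equation}
dim^s \nu=\max\{dim^* \nu^+, dim^* \nu^-\},\qquad dim_s \nu=\min\{dim_* \nu^+, dim_* \nu^-\},
\end{equation}
valid for every element of $\mathcal{\hat{M}}_s(X)$, in particular for each $\nu_n$ with its own Hahn sets. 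For the first identity one checks that $\nu(A)=\nu^+(X)$ holds exactly when $\nu^+(A)=\nu^+(X)$ and $\nu^-(A)=0$; since $\nu^+$ is carried by $P$ and $HD(A\cap P)\leq HD(A)$, intersecting an admissible set with $P$ never raises the dimension while preserving full positive mass, so the infimum equals $dim^*\nu^+$. The minimum term and the companion formula $dim_s\nu=\inf\{HD(A):\nu(A)\neq 0\}=\min\{dim_*\nu^+,dim_*\nu^-\}$ follow symmetrically.

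The upper semicontinuity of $dim_s$ is then immediate, and in fact needs no decomposition. Fix $s>dim_s\nu$ and choose $A$ with $HD(A)<s$ and $\nu(A)\neq 0$. Setwise convergence gives $\nu_n(A)\to\nu(A)\neq 0$, so $\nu_n(A)\neq 0$ for all large $n$, whence $dim_s\nu_n\leq HD(A)<s$ eventually; letting $s\downarrow dim_s\nu$ yields $\limsup_n dim_s\nu_n\leq dim_s\nu$ (the case $\nu\equiv 0$ being trivial).

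The lower semicontinuity of $dim^s$ is the substantive part, and here lies the main obstacle: setwise convergence $\nu_n\stackrel{s}{\rightarrow}\nu$ does \emph{not} force $\nu_n^\pm\stackrel{s}{\rightarrow}\nu^\pm$ (for instance $\sin(2\pi nx)\,\mathfrak{L}|_{[0,1]}\stackrel{s}{\rightarrow}0$ by Riemann--Lebesgue, while the positive parts converge setwise to $\tfrac{1}{\pi}\mathfrak{L}|_{[0,1]}\neq 0$), so Theorem~\ref{thm13} cannot be applied to the parts directly. What survives, and is precisely what is needed, is the one-sided Fatou inequality $\liminf_n\nu_n^+(B)\geq\nu^+(B)$ for every fixed $B\in\mathcal{B}$, obtained from the Jordan formula $\nu^+(B)=\sup_{A\subseteq B}\nu(A)$ together with $\nu_n(A)\to\nu(A)$. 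With this I would argue by contradiction: if $\liminf_n dim^s\nu_n<dim^s\nu$, pick $s$ strictly between the two; then $s<\max\{dim^*\nu^+,dim^*\nu^-\}$, say $s<dim^*\nu^+$, while along a subsequence $dim^s\nu_n\leq s$, hence $dim^*\nu_n^+\leq s$. Choosing $s'\in(s,dim^*\nu^+)$ and $\nu_n^+$-full sets $A_n$ with $HD(A_n)\leq s'$, so that $\nu_n^+(X\setminus A_n)=0$, set $B_N=\bigcup_{n\geq N}A_n$. Countable stability of Hausdorff dimension gives $HD(B_N)\leq s'$, and $X\setminus B_N\subseteq X\setminus A_m$ gives $\nu_m^+(X\setminus B_N)=0$ for all $m\geq N$. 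The Fatou inequality then forces $\nu^+(X\setminus B_N)\leq\liminf_m\nu_m^+(X\setminus B_N)=0$, so $B_N$ carries full $\nu^+$-mass with $HD(B_N)\leq s'<dim^*\nu^+$, contradicting the definition of $dim^*\nu^+$. The case $s<dim^*\nu^-$ is identical with $\nu^-$ in place of $\nu^+$.

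The crux is thus the compatibility between the one-sided (Fatou-type) control over the Jordan parts that setwise convergence actually supplies and the direction required for lower semicontinuity of $dim^s$, namely producing small-dimensional sets that remain full in the limit. I expect the only delicate points to be the verification of the decomposition identities near the extremal sets and the routine choice of the near-optimal full sets $A_n$, both of which are elementary once the Fatou inequality is in place.
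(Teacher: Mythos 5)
Your proposal is correct: the decomposition identities $dim^s\nu=\max\{dim^*\nu^+,dim^*\nu^-\}$ and $dim_s\nu=\min\{dim_*\nu^+,dim_*\nu^-\}$ hold exactly as you argue (intersecting an admissible set with the Hahn positive set preserves the relevant mass and never raises dimension), your direct argument for the upper semi-continuity of $dim_s$ is sound, and the contradiction argument for $dim^s$ via the Fatou inequality $\liminf_n\nu_n^+(B)\geq\nu^+(B)$, countable stability of Hausdorff dimension, and the sets $B_N=\cup_{n\geq N}A_n$ is complete. The route, however, is genuinely different from the paper's. The paper never isolates these identities or any statement about the Jordan parts of the $\nu_n$; instead it applies the Hahn decomposition only to the limit $\nu$, takes a positive set $X^+$ realizing $\max\{\nu(B)\}_{B\in\mathcal{B}}$, and, for each member $\nu_{n_i}$ of the bad subsequence, removes from $X^+$ a set of dimension less than $a$ carrying $\nu_{n_i}$ (in effect a carrier of the whole variation, since both the max- and the min-realizing sets of $\nu_{n_i}$ have dimension below $a$), producing $X_i^+\subset X^+$ with $\nu_{n_i}(X_i^+)=0$; setwise convergence of the signed measures themselves is then applied to the fixed set $X^+\setminus X_\infty^+$, where $X_\infty^+=\cup_{i\geq N}(X^+\setminus X_i^+)$, to force $\nu(X^+\setminus X_\infty^+)=0$ and reach the same kind of contradiction. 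So both proofs share the skeleton (contradiction, countable union of small-dimensional carriers, countable stability of $HD$), but the key transfer step differs: the paper transfers nullity of a fixed test set from $\nu_{n_i}$ to $\nu$ directly by setwise convergence, while you transfer it at the level of positive parts via the one-sided Fatou inequality. Your version makes explicit three things the paper does not: a clean reduction of the signed dimensions to the unsigned ones of Theorem \ref{thm13}; a precise explanation, with the Riemann--Lebesgue counterexample, of why that theorem cannot simply be applied to $\nu_n^{\pm}$ (the Jordan decomposition is not setwise continuous); and a more economical use of the hypothesis, since you only need small carriers of $\nu_n^+$ rather than of the whole variation $|\nu_{n_i}|$ --- precisely the point where the paper's own proof is tersest, as it asserts the existence of the sets $X_i^+$ with $\nu_{n_i}(X_i^+)=0$ without spelling out that both extremal carriers must be removed. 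The paper's route, in exchange, needs no auxiliary lemma about the parts at all.
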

\begin{proof}
The conclusion $dim_s$ is upper semi-continuous follows a similar argument as the proof of \cite[Theorem 2.9]{Ma}. Now we show $dim^s$ is lower semi-continuous under the setwise topology on $\mathcal{\hat{M}}_s(X)$.   Without loss of generality we assume 
\begin{center}
$\inf\big\{HD(A): \nu(A)=\max\{\nu(B)\}_{B\in\mathcal{B}}\big\}\geq\inf\big\{HD(A): \nu(A)=\min\{\nu(B)\}_{B\in\mathcal{B}}\big\}$
\end{center}
and
\begin{center}
$\inf\big\{HD(A): \nu(A)=\max\{\nu(B)\}_{B\in\mathcal{B}}\big\}>0$, $\max\{\nu(B)\}_{B\in\mathcal{B}}>0$.
\end{center}
Suppose conversely that (\ref{eq45}) does not hold. Then we can find a subsequence $\{n_i\}_{i=1}^\infty$ and a real $0<a<\inf\big\{HD(A): \nu(A)=\max\{\nu(B)\}_{B\in\mathcal{B}}\big\}$ such that
\begin{equation}\label{eq44}
\lim_{i\rightarrow\infty} dim^s \nu_{n_i}<a< dim^s \nu=\inf\big\{HD(A): \nu(A)=\max\{\nu(B)\}_{B\in\mathcal{B}}\big\}.
\end{equation}
Now apply the Hahn decomposition theorem to the measure $\nu$, let $X^+\subset X$ be a positive set of $\nu$ with $\nu(X^+)=\max\{\nu(B)\}_{B\in\mathcal{B}}$. Under the above assumption we have 
\begin{center}
$HD(X^+)\geq dim^s \nu>a>dim^s \nu_{n_i}$ 
\end{center}
for any $i$ large enough. Since $\lim_{i\rightarrow\infty} \nu_{n_i}(X^+)=\nu(X^+)>0$, considering (\ref{eq44}), for any $i$ large enough, there must exist $X^+_i\subset X^+$, such that
\begin{center}
$\nu_{n_i}(X^+_i)=0$ and $HD(X^+_i)=HD(X^+)>a>HD(X^+\setminus X^+_i)$.
\end{center}
Now let $X^+_\infty=\cup_{i=N}^\infty (X^+\setminus X^+_i)$ for some integer $N$ large enough. One can see that 
\begin{center}
$X^+\setminus X^+_\infty\neq\emptyset$ 
\end{center}
as 
\begin{equation}\label{eq43}
HD(X^+_\infty)\leq a<HD(X^+).
\end{equation}
Moreover, we have
\begin{center}
$\nu(X^+\setminus X^+_\infty)=\lim_{i\rightarrow\infty} \nu_{n_i}(X^+\setminus X^+_\infty)=0$,
\end{center}
which forces
\begin{equation}\label{eq42}
\nu(X^+_\infty)=\nu(X^+).
\end{equation}
Now (\ref{eq43}) together with (\ref{eq42}) contradict the fact that $dim^s \nu>a$, which finishes the proof.

\end{proof}

\section{Absolute continuity of convergent sequences of measures under the setwise topology}\label{sec2}

In  this section we discuss the relationship between absolute continuity of a sequence of measures and absolute continuity of its limit measure (under the weak or setwise topology, in case the sequence converges). We start from the following basic result.

\begin{proposition}\label{thm14}
For a sequence of measures $\nu_n\stackrel{s}{\rightarrow}\nu$ in $\mathcal{\hat{M}}(X)$ as $n\rightarrow\infty$, if there exists a subsequence  $\{\nu_{n_i}\}_{i=1}^\infty$ such that $\nu_{n_i}$ is absolutely continuous with respect to some $\varrho\in \mathcal{\hat{M}}(X)$ for any $1\leq i<\infty$, then $\nu$ is absolutely continuous with respect to $\varrho$.
\end{proposition}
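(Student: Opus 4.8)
The plan is to work directly with the null-set characterization of absolute continuity rather than the $\varepsilon$-$\delta$ formulation: recall that $\nu\ll\varrho$ means precisely that $\nu(B)=0$ whenever $B\in\mathcal{B}$ satisfies $\varrho(B)=0$. The key observation is that setwise convergence controls the value of the limit measure on \emph{every} individual Borel set, so the pointwise-on-sets convergence alone does all the work; there is no need to invoke Radon--Nikodym densities or uniform absolute continuity.

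First I would fix an arbitrary $B\in\mathcal{B}$ with $\varrho(B)=0$ and aim to show $\nu(B)=0$. Since each $\nu_{n_i}\ll\varrho$ by hypothesis, we have $\nu_{n_i}(B)=0$ for every $i$. Because a subsequence of a setwisely convergent sequence converges setwisely to the same limit, evaluating $\nu_{n_i}\stackrel{s}{\rightarrow}\nu$ on the fixed set $B$ yields
$$
\nu(B)=\lim_{i\rightarrow\infty}\nu_{n_i}(B)=0.
$$
As $B$ was an arbitrary $\varrho$-null set, this is exactly the statement $\nu\ll\varrho$, finishing the argument.

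There is essentially no analytic obstacle here; the entire content of the proposition resides in the strength of the setwise topology. Accordingly, the point I would stress is the contrast that makes the hypothesis sharp: under the weaker weak topology the analogous statement fails, because weak convergence only controls integrals of bounded continuous functions and cannot pin down the value $\nu(B)$ for an individual, possibly rough, $\varrho$-null Borel set $B$, so one could not pass the equalities $\nu_{n_i}(B)=0$ to the limit. This is precisely why the setwise topology, and not the weak topology, is the correct framework for the approximating method employed throughout the paper.
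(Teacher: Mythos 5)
Your proof is correct and is essentially the paper's argument in contrapositive form: the paper supposes $\nu(A)>0$ with $\varrho(A)=0$ and derives $\nu_{n_N}(A)>0$ for large $N$ from setwise convergence, while you evaluate the same convergence $\nu(B)=\lim_{i\to\infty}\nu_{n_i}(B)=0$ directly on an arbitrary $\varrho$-null set $B$; the key step — that setwise convergence along the subsequence pins down the limit measure's value on each individual Borel set — is identical in both.
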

\begin{proof}
We do this by reduction to absurdity. Suppose that $\nu$ is not absolutely continuous with respect to $\varrho$, then there exists $A\in\mathcal{B}$ such that $\nu(A)>0$ while $\varrho(A)=0$. Since $\nu_n\stackrel{s}{\rightarrow}\nu$ as $n\rightarrow\infty$, there exists $N$ large enough such that $\nu_{n_N}(A)>0$. Considering $\varrho(A)=0$, this contradicts the fact that $\nu_{n_N}$ is absolutely continuous with respect to $\varrho$. 

\end{proof}

Due to Proposition \ref{thm14}, we make the following definition formally.
\begin{definition}
A sequence of  measures $\{\nu_n\}_{n=1}^\infty$ is said to be \emph{absolutely continuous} with respect to some $\varrho$ on $\mathcal{\hat{M}}(X)$  if it contains a subsequence such that every measure in the subsequence is absolutely continuous with respect to $\varrho$.
\end{definition}

In fact Proposition \ref{thm14} holds for any topological ambient space $X$. However, it is possible that a sequence of absolutely continuous measures converges weakly to a measure which is not absolutely continuous, as one can see from the Example \ref{exm6}. Let  $\mathfrak{L}^d|_{A}$ be the restriction of $\mathfrak{L}^d$ on a set $A\subset\mathbb{R}^d$.

\begin{exm}\label{exm6}
Consider the Cantor middle-third set $J$ appears as the attractor of the homogeneous IFS 
\begin{center}
$S=\big\{s_1(x)=\frac{1}{3}x,s_2(x)=\frac{1}{3}x+\frac{2}{3}\big\}$
\end{center}
on $X=[0,1]$. Let 
\begin{center}
$\nu_n=\frac{1}{2^n} \mathfrak{L}^1|_{\cup_{\omega\in\mathbb{N}_2^n}s_\omega(X)}$.
\end{center}
Let  $\nu$ be the unique probability such that 
\begin{center}
$\nu=\frac{1}{2}\nu\circ s_1^{-1}+\frac{1}{2}\nu\circ s_2^{-1}$. 
\end{center}

\end{exm}

Unfortunately, the inverse of Proposition \ref{thm14} is not always true, as one can see from the following example. Let $\delta_{\{x\}}$ be the Dirac probability measure at the point $x\in X$.

\begin{exm}\label{exm7}
For $X=[0,1]$, let 
\begin{center}
$\nu_n=\mathfrak{L}^1|_{X}+\frac{1}{n}\delta_{\{1\}}$
\end{center}
be a sequence of measures in $\mathcal{\hat{M}}(X)$. 
\end{exm}

It is easy to justify that $\nu_n\stackrel{s}{\rightarrow}\mathfrak{L}^1|_{X}$  as $n\rightarrow\infty$ in Example \ref{exm7}. However, there are no  measure absolutely continuous with respect to $\mathfrak{L}^1|_{X}$ at all in the sequence $\{\nu_n\}_{n=1}^\infty$. One can see that in Example \ref{exm7} the measures in the sequence are all of mixed type. In fact this is the only reason which prevents the inverse of Proposition \ref{thm14} to be true.

\begin{proposition}\label{pro1}
For a sequence of measures $\nu_n\stackrel{s}{\rightarrow}\nu$ in $\mathcal{\hat{M}}(X)$ as $n\rightarrow\infty$ with $\nu(X)>0$, if $\nu$ is absolutely continuous with respect to some $\varrho\in\mathcal{\hat{M}}(X)$ and the measures in the sequence are all of pure type with respect to $\varrho$, then there exists $N\in\mathbb{N}$ large enough such such that $\nu_n$ is absolutely continuous with respect to $\varrho\in \mathcal{\hat{M}}(X)$ for any $n\geq N$.
\end{proposition}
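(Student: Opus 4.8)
The plan is to argue by contradiction, exploiting the \emph{pure type} hypothesis to convert any failure of absolute continuity into outright singularity, and then to manufacture a single $\varrho$-null set that simultaneously carries an entire subsequence of the $\nu_n$. This is exactly the mechanism that is unavailable in Example \ref{exm7}, where an absolutely continuous bulk coexists with a vanishing singular spike, so the pure type assumption must be doing the decisive work.

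First I would suppose the conclusion fails. Then no tail of the sequence consists entirely of absolutely continuous measures, so there is a subsequence $\{\nu_{n_k}\}_{k=1}^\infty$ none of whose members is absolutely continuous with respect to $\varrho$. By the pure type hypothesis each such $\nu_{n_k}$ must then be singular with respect to $\varrho$; invoking the Lebesgue decomposition, for every $k$ there is a set $A_k\in\mathcal{B}$ with $\varrho(A_k)=0$ on which $\nu_{n_k}$ is concentrated, meaning $\nu_{n_k}(A_k)=\nu_{n_k}(X)$.

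The key step is to pass from these possibly distinct carriers to a common one. I would set $A=\cup_{k=1}^\infty A_k$. Since $\varrho$ is a countably additive finite measure, $\varrho(A)=0$, and because $\nu$ is absolutely continuous with respect to $\varrho$ this forces $\nu(A)=0$. On the other hand, for each fixed $k$ we have $A\supset A_k$, so the monotonicity chain $\nu_{n_k}(A)\geq\nu_{n_k}(A_k)=\nu_{n_k}(X)\geq\nu_{n_k}(A)$ collapses to the identity $\nu_{n_k}(A)=\nu_{n_k}(X)$ for every $k$.

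Finally I would evaluate the setwise convergence $\nu_{n_k}\stackrel{s}{\rightarrow}\nu$ on the two Borel sets $A$ and $X$: letting $k\to\infty$ gives $\nu_{n_k}(A)\to\nu(A)=0$ and $\nu_{n_k}(X)\to\nu(X)>0$, while the identity above forces these two limits to coincide, yielding $0=\nu(X)>0$, a contradiction. The only genuinely delicate point, and the place where the hypotheses do all the work, is the union trick: individually the singular measures may sit on wildly different $\varrho$-null sets, and it is precisely the countable additivity of $\varrho$ together with the absolute continuity of the limit $\nu$ that allows one $\varrho$-null set to absorb the full mass of the entire subsequence while carrying none of $\nu$'s mass. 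I do not anticipate any further obstacle, since every other estimate is routine monotonicity.
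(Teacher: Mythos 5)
Your proof is correct and follows essentially the same route as the paper's: contradiction via a subsequence of (by pure type) singular measures, the union $A=\cup_k A_k$ of their $\varrho$-null carriers with $\varrho(A)=0$ by $\sigma$-additivity, and setwise convergence forcing $\nu(A)=\lim_k\nu_{n_k}(A)=\lim_k\nu_{n_k}(X)=\nu(X)>0$, contradicting $\nu\ll\varrho$. The only cosmetic difference is that you first deduce $\nu(A)=0$ from absolute continuity and then exhibit $0=\nu(X)>0$, whereas the paper states the same contradiction directly as $\varrho(A)=0$ versus $\nu(A)>0$.
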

\begin{proof}
Suppose in the contrary that the conclusion is not true, then we can find a subsequence $\{n_i\}_{i=1}^\infty\subset \mathbb{N}$ such that any measure in the  sequence $\{\nu_{n_i}\}_{i=1}^\infty$ is singular with respect to $\varrho$. Denote an essential support of $\nu_{n_i}$ by $A_i\subset X$ with $\varrho(A_i)=0$ for any $1\leq i<\infty$. Let $A=\cup_{i=1}^\infty A_i$, according to the $\sigma$-additivity of $\varrho$, we have 
\begin{equation}\label{eq47}
\varrho(A)=0.
\end{equation}
Note that since $\nu_n\stackrel{s}{\rightarrow}\nu$ as $n\rightarrow\infty$, we have
\begin{equation}\label{eq46}
\nu(A)=\lim_{i\rightarrow\infty} \nu_{n_i}(A)=\lim_{i\rightarrow\infty} \nu_{n_i}(X)=\nu(X)>0.
\end{equation}
Now (\ref{eq47}) together with (\ref{eq46}) contradict the fact that $\nu$ is absolutely continuous with respect to $\varrho$.  
\end{proof}

Proposition \ref{pro1} also holds for any topological ambient space $X$, but it is not true for convergent sequences of measures under the weak topology on $\mathcal{\hat{M}}(X)$, as one can also see from our Example \ref{exm6}. The above results show that the absolute continuity of convergent sequences of measures and their limit measures are equivalent to each other under the setwise topology on $\mathcal{\hat{M}}(X)$, while this relationship is not true under the weak topology. This is another advantage that the setwise topology takes over the weak topology on $\mathcal{\hat{M}}(X)$.

\section{The $n$-th concentrating measures of probability on the symbolic spaces and some inherited properties}\label{sec4}

In this section we do the approximation of a probability measure $\mu$ on the infinite symbolic space $\mathbb{N}^\infty$ by the sequence of its concentrating measures $\{\mu_n\}_{n=1}^\infty$ supported on $\{\mathbb{N}_n^\infty\}_{n=1}^\infty$ respectively. We will prove that some properties are inherited by the concentrating measures from $\mu$, under the hypothesis that the sequence of infinite random variables in the infinite stochastic process 
\begin{center}
$\bold{Y}=\{Y_i\}_{i\in\mathbb{N}}$
\end{center}
is independent  under the law $\mu$. As indicated in Remark \ref{rem2}, one may be able to remove the independent hypothesis in due course if one tries the approximation of $\mu$ by its restrictions $\{\mu|_{\mathbb{N}_n^\infty}\}_{n=1}^\infty\subset\mathcal{\hat{M}}(\mathbb{N}_n^\infty)$ instead of the concentrating measures $\{\mu_n\}_{n=1}^\infty\subset\mathcal{M}(\mathbb{N}_n^\infty)$.  

\begin{definition}\label{def4}
For a probability measure $\mu$ on the symbolic space $\mathbb{N}^\infty$, define its $n$-th \emph{concentrating measure} $\mu_n$ for any $n\in\mathbb{N}$ to be the unique probability measure supported on $\mathbb{N}_n^\infty$ satisfying the following conditions.

\begin{enumerate}[(1)]
\item   On the first level cylinders,
\begin{center}
$\mu_n([i])=\mu([i])$ for $1\leq i\leq n-1$,  $\mu_n([n])=\sum_{i=n}^\infty\mu([i])$.
\end{center}

\item On the second level cylinders,
\begin{center}
$
\begin{array}{l}
\mu_n([ij])=\mu([ij]) \mbox{ for } 1\leq i,j\leq n-1,\\
\mu_n([in])=\sum_{k=n}^\infty\mu([ik]) \mbox{ for } 1\leq i\leq n-1, \\
\mu_n([nj])=\sum_{k=n}^\infty\mu([kj]) \mbox{ for } 1\leq j\leq n-1,\\
\mu_n([nn])=\sum_{l=n}^\infty\sum_{k=n}^\infty\mu([kl]).
\end{array}
$ 
\end{center}

\item For any $q$-th cylinder $[i_1i_2\cdots i_q]\subset \mathbb{N}_n^\infty$ with $i_1,i_2,\cdots, i_q\in \mathbb{N}_n$ and $q\in \mathbb{N}$, let 
\begin{center}
$\#\{1\leq k\leq q: i_k=n\}=l$. 
\end{center}
Number the indexes in $\{1\leq k\leq q: i_k=n\}$ in increasing order as
\begin{center}
$j_1<j_2<\cdots<j_l$,
\end{center} 
that is, $\{j_1,j_2,\cdots,j_l\}=\{1\leq k\leq q: i_k=n\}$. Then
\begin{center}
$
\begin{array}{ll}
& \mu_n([i_1i_2\cdots i_q]) \\
= & \sum_{n\leq r_1, r_2, \cdots, r_l<\infty} \mu([i_1\cdots i_{j_1-1}r_1 i_{j_1+1}\cdots i_{j_2-1}r_2 i_{j_2+1}\cdots i_{j_l-1}r_l i_{j_l+1}\cdots  i_q]).
\end{array}
$
\end{center} 
\end{enumerate}

\end{definition}

Since for any $q$-level cylinder $[i_1i_2\cdots i_q]\subset \mathbb{N}_n^\infty$ with $i_1,i_2,\cdots i_q\in \mathbb{N}_n$ and $q\in \mathbb{N}$, the conditions in Definition \ref{def4} (1)-(3) imply
\begin{center}
$\mu_n([i_1i_2\cdots i_q])=\sum_{j=1}^n\mu_n([i_1i_2\cdots i_q j])$
\end{center}
between  cylinders of successive levels, the existence and uniqueness of the measure $\mu_n$  are guaranteed by the \emph{Kolmogorov extension theorem}, see for example \cite[Theorem 2.4.3]{Tao}. Moreover, $\mu_n$ is invariant for any $n\in \mathbb{N}$ if $\mu$ is invariant with respect to the shift map $\sigma$ according to \cite[Theorem 2]{Ber}. In fact, more properties are inherited from $\mu$ as the structure of the original measure  is suitably preserved considering the structure of the  concentrating measures $\{\mu_n\}_{n\in\mathbb{N}}$  on the cylinder sets, under the independent hypothesis of $\bold{Y}$ with respect to $\mu$.

\begin{lemma}\label{lem15}
For a measure $\mu$ on the symbolic space $\mathbb{N}^\infty$, if the sequence of infinite random variables in $\bold{Y}$ under the law $\mu$ is independent, then it is also independent under the concentrating law $\mu_n$ for any $n\in \mathbb{N}$.
\end{lemma}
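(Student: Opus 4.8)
The plan is to realise the concentrating measure $\mu_n$ as a coordinatewise pushforward of $\mu$ and then invoke the elementary principle that applying a fixed function separately to each coordinate preserves independence.

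First I would introduce the collapsing map $\phi_n:\mathbb{N}\to\mathbb{N}_n$ defined by $\phi_n(i)=i$ for $i<n$ and $\phi_n(i)=n$ for $i\geq n$, and let $\Phi_n:\mathbb{N}^\infty\to\mathbb{N}_n^\infty$ act coordinatewise, $(\Phi_n\omega)_k=\phi_n(\omega_k)$. The claim is that $\mu_n=\mu\circ\Phi_n^{-1}$. To verify it, fix a cylinder $[i_1\cdots i_q]\subset\mathbb{N}_n^\infty$ and observe that $\Phi_n^{-1}([i_1\cdots i_q])$ is exactly the set of $\omega\in\mathbb{N}^\infty$ with $\omega_k=i_k$ whenever $i_k<n$ and $\omega_k\geq n$ whenever $i_k=n$; evaluating $\mu$ on this set reproduces verbatim the defining sum of Definition \ref{def4}(3). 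Since $\mu\circ\Phi_n^{-1}$ is a probability measure supported on $\mathbb{N}_n^\infty$ that agrees with $\mu_n$ on every cylinder, the uniqueness guaranteed by the Kolmogorov extension theorem forces $\mu_n=\mu\circ\Phi_n^{-1}$.

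With this identification the $k$-th coordinate variable under $\mu_n$ is $\phi_n\circ Y_k$, a deterministic function of $Y_k$ alone. Writing $p_k(i)=\mu(\{\omega_k=i\})$ for the $\mu$-marginals, I would then check directly that for arbitrary $q$ and $i_1,\dots,i_q\in\mathbb{N}_n$ the joint cylinder mass factorises,
\[
\mu_n([i_1\cdots i_q])=\mu\big(\{\phi_n(\omega_k)=i_k,\ 1\leq k\leq q\}\big)=\prod_{k=1}^q\mu\big(\{\phi_n(\omega_k)=i_k\}\big),
\]
the middle-to-right equality being precisely the $\mu$-independence of the $Y_k$ applied to the coordinate-separated events $\{\phi_n(\omega_k)=i_k\}$. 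Since each factor $\mu(\{\phi_n(\omega_k)=i_k\})$ equals $p_k(i_k)$ when $i_k<n$ and $\sum_{r\geq n}p_k(r)$ when $i_k=n$, and these are exactly the $\mu_n$-marginals of the $k$-th coordinate, the displayed identity says that the coordinates are independent under $\mu_n$, which is the assertion of the lemma.

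The one step that genuinely needs care is the factorisation of the defining sum when the symbol $n$ occurs at several positions $j_1<\cdots<j_l$: there one must interchange the multi-index summation $\sum_{n\leq r_1,\dots,r_l<\infty}$ with the product supplied by $\mu$-independence, collapsing it into $\prod_{m=1}^l\sum_{r\geq n}p_{j_m}(r)$. Because every term is non-negative and bounded by $1$, this rearrangement is legitimate by Tonelli's theorem for non-negative series, so no convergence subtlety intervenes. This is the sole technical point, and it is routine.
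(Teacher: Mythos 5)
Your proof is correct, and it takes a genuinely different---and cleaner---route than the paper's. The paper works by direct computation on cylinders: it expands $\mu_n([i_1\cdots i_q])$ via the defining sum of Definition \ref{def4}(3), factorizes each summand using $\mu$-independence of the coordinates, and regroups the multi-index sum into a product of first-level $\mu_n$-masses; the sum--product interchange you justify by Tonelli is exactly the (implicit) combinatorial core of that computation, so at bottom the two arguments rest on the same identity. What you add is the structural identification $\mu_n=\mu\circ\Phi_n^{-1}$, where $\Phi_n$ is the coordinatewise collapsing map, after which the lemma becomes the standard fact that applying a deterministic function coordinate by coordinate preserves independence. (A minor point in your favour: by phrasing the factorization through the $k$-th marginals $p_k$, you avoid the paper's mild abuse of writing $\mu([i_k])$ for the law of the $k$-th coordinate, which is only literally correct for shift-invariant $\mu$.) Your identification also buys more than the lemma asks for: it holds for \emph{any} probability measure $\mu$ with no independence assumption, and since $\Phi_n$ intertwines the shift on $\mathbb{N}^\infty$ with the shift on $\mathbb{N}_n^\infty$, the measure $\mu_n=\mu\circ\Phi_n^{-1}$ is a factor of $(\mathbb{N}^\infty,\sigma,\mu)$; hence shift-invariance and ergodicity of $\mu$ pass to $\mu_n$ automatically, with no appeal to the Kolmogorov $0$--$1$ law. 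That observation would prove Lemma \ref{lem16} under ergodicity alone, which speaks directly to the question, raised in Remark \ref{rem3} and in the remark following Lemma \ref{lem16}, of whether the independence hypothesis can be removed there.
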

\begin{proof}
For any $q$-word $\omega=i_1i_2\cdots i_q\in \mathbb{N}_n^q$, let $\#\{1\leq k\leq q: i_k=n\}=l\geq 0$ and
\begin{center}
$\{j_1,j_2,\cdots,j_l\}=\{1\leq k\leq q: i_k=n\}$.
\end{center}

Since $\bold{Y}$ is independent under the law $\mu$, we have
\begin{center}
$
\begin{array}{ll}
& \mu_n([\omega])  \\
= & \sum_{n\leq r_1< \infty, n\leq r_2< \infty, \cdots, n\leq r_l< \infty} \mu([i_1\cdots i_{j_1-1}r_1 i_{j_1+1}\cdots i_{j_2-1}r_2 i_{j_2+1}\cdots i_{j_l-1}r_l i_{j_l+1}\cdots  i_q])\\
= & \mu([i_1])\cdots \mu([i_{j_1-1}])\big(\sum_{n\leq r_1< \infty}\mu([r_1])\big)\mu([i_{j_1+1}])\cdots\mu([i_{j_2-1}])\big(\sum_{n\leq r_2< \infty}\mu([r_2])\big)\mu([i_{j_2+1}])\\
&\cdots \mu([i_{j_l-1}])\big(\sum_{n\leq r_l< \infty}\mu([r_l])\big)\mu([i_{j_l+1}])\cdots  \mu([i_q])\\
= & \mu_n([i_1])\cdots \mu_n([i_{j_1-1}])\mu_n([i_{j_1}])\mu_n([i_{j_1+1}])\cdots\mu_n([i_{j_2-1}])\mu_n([i_{j_2}])\mu_n([i_{j_2+1}])\cdots \\
& \mu_n([i_{j_l-1}])\mu_n([i_{j_l}])\mu_n([i_{j_l+1}])\cdots  \mu_n([i_q]),
\end{array}
$
\end{center}
which justifies the independence of the random variables in $\bold{Y}$ under the law $\mu_n$ for any $n\in \mathbb{N}$.
\end{proof}

The ergodicity is also inherited by the concentrating measures $\{\mu_n\}_{n\in\mathbb{N}}$ under the independent hypothesis of $\bold{Y}$.

\begin{lemma}\label{lem16}
For an ergodic measure $\mu$ on the symbolic space $\mathbb{N}^\infty$, if the sequence of infinite random variables in $\bold{Y}$ under the law $\mu$ is independent, then  the concentrating measure $\mu_n$ is also ergodic on $\mathbb{N}_n^\infty$ for any $n\in \mathbb{N}$ with respect to the shift map $\sigma$.
\end{lemma}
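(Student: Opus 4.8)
The plan is to identify $\mu_n$ as a Bernoulli measure and then invoke the classical ergodicity of Bernoulli shifts. Two ingredients are already available. First, ergodicity of $\mu$ entails in particular that $\mu$ is $\sigma$-invariant, so by the remark preceding Lemma~\ref{lem15} (namely \cite[Theorem 2]{Ber}) each concentrating measure $\mu_n$ is $\sigma$-invariant on $\mathbb{N}_n^\infty$. Second, Lemma~\ref{lem15} guarantees that the coordinate variables $\mathbf{Y}=\{Y_i\}_{i\in\mathbb{N}}$ remain independent under the law $\mu_n$. Thus the entire task reduces to showing that a shift-invariant measure on the finite alphabet $\mathbb{N}_n$ with independent coordinates is ergodic.

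First I would promote independence to the i.i.d.\ property. Applying invariance to the event $\{Y_1=i\}$ gives $\mu_n(\{Y_1=i\})=\mu_n(\sigma^{-1}\{Y_1=i\})=\mu_n(\{Y_2=i\})$, and inductively all the $Y_i$ share one common marginal $p=(p_1,\dots,p_n)$ with $p_i=\mu_n([i])$. Combined with the independence supplied by Lemma~\ref{lem15}, this exhibits $\mu_n$ as the product (Bernoulli) measure with marginal $p$ on $\mathbb{N}_n^\infty$.

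Next I would prove ergodicity of this Bernoulli measure directly through the Kolmogorov zero-one law rather than merely quoting it. Let $A\subset\mathbb{N}_n^\infty$ be $\sigma$-invariant, $\sigma^{-1}A=A$. Since membership in $\sigma^{-1}A$ depends only on $Y_2,Y_3,\dots$, one has $\sigma^{-1}A\in\sigma(Y_2,Y_3,\dots)$; iterating the identity $A=\sigma^{-k}A$ shows $A\in\sigma(Y_{k+1},Y_{k+2},\dots)$ for every $k$, so $A$ belongs to the tail $\sigma$-algebra $\bigcap_{k\ge 1}\sigma(Y_k,Y_{k+1},\dots)$. By the independence of $\mathbf{Y}$ under $\mu_n$, Kolmogorov's zero-one law makes this tail $\sigma$-algebra $\mu_n$-trivial, so $\mu_n(A)\in\{0,1\}$, which is precisely the ergodicity of $\mu_n$ with respect to $\sigma$.

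The only point requiring genuine care is the reconciliation of the two notions of invariance: the definition of ergodicity refers to sets $A$ with $\mu_n(\sigma^{-1}A\,\triangle\,A)=0$, whereas the tail argument needs strict invariance. I would handle this in the standard way, replacing such an $A$ by the genuinely invariant set $\tilde A=\limsup_{k\to\infty}\sigma^{-k}A$, for which $\sigma^{-1}\tilde A=\tilde A$ and, using the invariance of $\mu_n$, $\mu_n(A\,\triangle\,\tilde A)=0$, and then running the tail argument on $\tilde A$. Beyond this routine measure-theoretic adjustment I anticipate no real obstacle: the substantive hypothesis that $\mu$ be ergodic is used only insofar as it forces $\mu$, and hence $\mu_n$, to be invariant, after which ergodicity is no more than the assertion that a finite-alphabet Bernoulli shift is ergodic (compare \cite{Wal}).
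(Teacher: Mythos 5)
Your proof is correct and follows essentially the same route as the paper's: both deduce independence of the coordinates under $\mu_n$ from Lemma~\ref{lem15}, observe that shift-invariant sets lie in the tail $\sigma$-algebra, and conclude by Kolmogorov's zero-one law. The extra material you supply --- identifying $\mu_n$ as a Bernoulli (i.i.d.) measure, which the zero-one law does not actually require, and the standard replacement of an almost-invariant set by the strictly invariant set $\limsup_{k}\sigma^{-k}A$ --- consists of sound refinements of details the paper leaves implicit, not a different argument.
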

\begin{proof}
This is a standard proof in classical probability techniques, with all the above preparations. First, due to Lemma  \ref{lem15} and the independence assumption, the sequence of random variables $\{Y_i\}_{i=1}^\infty$ in  $\bold{Y}$ under the law $\mu_n$ is independent for any $n\in \mathbb{N}$. Now let $\mathcal{B}_{\mathbb{N}_n^\infty}$ be the $\sigma$-algebra generated by the cylinder sets in $\mathbb{N}_n^\infty$ for some $n\in \mathbb{N}$. Let
\begin{center}
$\mathcal{B}_k=\{\mathbb{N}_n^k\times B: B\in \mathcal{B}_{\mathbb{N}_n^\infty}\}$.
\end{center}  
The tail $\sigma$-algebra is define to be 
\begin{center}
$\mathcal{B}_T=\cap_{k\in\mathbb{N}}\mathcal{B}_k$.
\end{center} 
Obviously the events
\begin{center}
$\{A\in \mathcal{B}_{\mathbb{N}_n^\infty}: \sigma^{-1}(A)=A\}\subset \mathcal{B}_T$
\end{center}
are all tail events. So according to the Kolmogorov’s $0-1$ law (see for example \cite[Proposition 3b9]{Tsi} or \cite{Shi}),
\begin{center}
$\mu_n(A)=0$ or $1$
\end{center}
for any event in $\{A\in \mathcal{B}_{\mathbb{N}_n^\infty}: \sigma^{-1}(A)=A\}$.
\end{proof}

\begin{rem}
Similar to Remark  \ref{rem3}, we are wondering whether the result holds without the hypothesis of independence on the sequence of infinite random variables with law $\mu$, which essentially confines our main results to be under this condition. 
\end{rem}
Considering the limit behaviour of the sequence of concentrating measures $\{\mu_n\}_{n\in\mathbb{N}}$, we have the following simple but important result.

\begin{proposition}\label{pro2}
For a probability measure $\mu$ on the symbolic space $\mathbb{N}^\infty$ such that $\mu(\cup_{n=1}^\infty \mathbb{N}_n^\infty)=1$, let $\mu_n$ be its $n$-th concentrating measure  on $\mathbb{N}_n^\infty$ for any $n\in \mathbb{N}$. Then
\begin{center}
$\mu_n\stackrel{s}\rightarrow \mu$ 
\end{center}
as $n\rightarrow\infty$.
\end{proposition}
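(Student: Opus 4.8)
The claim is that the concentrating measures $\mu_n$ converge setwisely to $\mu$ on $\mathbb{N}^\infty$, i.e.\ $\mu_n(B)\to\mu(B)$ for every $B\in\mathcal{B}_{\mathbb{N}^\infty}$. The plan is first to prove convergence on the generating algebra of cylinder sets and then to promote it to the full $\sigma$-algebra by a standard monotone-class / Portmanteau argument, using the hypothesis $\mu(\cup_{n=1}^\infty\mathbb{N}_n^\infty)=1$ to control the mass escaping to infinity. First I would fix a cylinder $[\tau]$ with $\tau=i_1i_2\cdots i_q\in\mathbb{N}^q$. For $n$ large enough that $n>\max\{i_1,\ldots,i_q\}$, none of the coordinates of $\tau$ equals the ``absorbing'' symbol $n$, so in the notation of Definition \ref{def4} the count $l=0$ and the defining formula collapses to $\mu_n([\tau])=\mu([\tau])$ \emph{exactly}. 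Thus for every fixed cylinder $[\tau]$ we have $\mu_n([\tau])=\mu([\tau])$ for all sufficiently large $n$, which is more than convergence: it is eventual equality. This handles the algebra $\mathcal{A}$ of finite unions of cylinders, since any element of $\mathcal{A}$ is a finite disjoint union of cylinders and one can take $n$ larger than all finitely many symbols appearing.

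The second step is to pass from the algebra to $\mathcal{B}_{\mathbb{N}^\infty}=\sigma(\mathcal{A})$. Setwise convergence is equivalent to convergence of integrals against all bounded measurable functions, but the clean route here is the following. Because the $\mu_n$ and $\mu$ are all probability measures, it suffices to show $\limsup_n\mu_n(F)\le\mu(F)$ for $F$ closed (equivalently $\liminf_n\mu_n(G)\ge\mu(G)$ for $G$ open) together with convergence on the algebra, and then invoke a Portmanteau-type criterion; alternatively one argues directly via a $\pi$--$\lambda$ (monotone class) argument. Concretely I would let $\mathcal{D}=\{B\in\mathcal{B}_{\mathbb{N}^\infty}:\mu_n(B)\to\mu(B)\}$ and verify that $\mathcal{D}$ is closed under complementation (immediate since all measures are probabilities, $\mu_n(B^c)=1-\mu_n(B)\to 1-\mu(B)=\mu(B^c)$) and under countable disjoint unions; since $\mathcal{D}\supset\mathcal{A}$ and $\mathcal{A}$ is a $\pi$-system generating $\mathcal{B}_{\mathbb{N}^\infty}$, the monotone class theorem gives $\mathcal{D}=\mathcal{B}_{\mathbb{N}^\infty}$.

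The main obstacle is the countable-additivity step: closure of $\mathcal{D}$ under countable \emph{disjoint} unions is not automatic, because $\mu_n(B)\to\mu(B)$ for each piece does not by itself yield convergence for the infinite union without a uniform tail estimate. This is precisely where the hypothesis $\mu(\cup_{n=1}^\infty\mathbb{N}_n^\infty)=1$ enters. I would use it as follows: given $B=\sqcup_{k}B_k$ and $\varepsilon>0$, choose $K$ with $\mu(\sqcup_{k>K}B_k)<\varepsilon$, and then control the tail $\mu_n(\sqcup_{k>K}B_k)$ uniformly in $n$ by comparing it to the mass that $\mu_n$ places on words using large symbols. The structure of $\mu_n$ concentrates onto $\mathbb{N}_n^\infty$ exactly the mass that $\mu$ distributes over $\{\omega:\max_i\omega_i\ge n\}$; the condition $\mu(\cup_n\mathbb{N}_n^\infty)=1$ says $\mu\{\omega:\sup_i\omega_i=\infty\}=0$, so this redistributed mass is concentrated on a set of small $\mu$-measure and cannot accumulate to spoil the limit. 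Making this tail bound uniform in $n$ is the delicate point; once it is in hand, the three-$\varepsilon$ argument closes the countable-additivity step and the proposition follows. As a cleaner alternative that sidesteps the uniformity issue, one may observe that for the specific disjoint unions needed it suffices to test on cylinders, where eventual equality already holds, and then approximate an arbitrary $B$ from outside by a cylinder cover of $\mu$-measure within $\varepsilon$, again using $\mu(\cup_n\mathbb{N}_n^\infty)=1$ to ensure such finite-symbol covers exist.
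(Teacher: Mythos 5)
You handle the cylinder step correctly, and it coincides with the paper's first step: by Definition \ref{def4}, $\mu_n([\tau])=\mu([\tau])$ as soon as $n$ exceeds every symbol occurring in $\tau$, so on cylinders there is eventual equality, not merely convergence. The genuine gap is in the passage to the full $\sigma$-algebra, and you flag it yourself: closure of $\mathcal{D}=\{B\in\mathcal{B}_{\mathbb{N}^\infty}:\mu_n(B)\to\mu(B)\}$ under countable disjoint unions amounts to a uniform-in-$n$ tail estimate, and you never establish that estimate; you only assert that once it ``is in hand'' the proof closes. But that estimate \emph{is} the content of the proposition, and no soft argument can produce it from cylinder convergence alone: if $\mu$ is a Bernoulli measure with $\mu([i])>0$ for every $i\in\mathbb{N}$, then $\mu(\cup_m\mathbb{N}_m^\infty)=0$ while $\mu_n(\cup_m\mathbb{N}_m^\infty)=1$ for every $n$, even though the same eventual equality on cylinders holds; so setwise convergence genuinely fails without the hypothesis, and any proof must use the hypothesis quantitatively, not only through the qualitative heuristic that redistributed mass ``cannot accumulate.'' Your fallback route (outer approximation of $B$ by cylinder covers) is circular for the same reason: a cover controls $\mu(B)$, but controlling $\mu_n(B)$ uniformly in $n$ over an infinite union of cylinders is again exactly the missing estimate.

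The fix is short, and it is what the paper's terse appeal to ``regularity of the measures, or [FKZ, Theorem 2.3]'' is standing in for. By uniqueness in the Kolmogorov extension theorem, $\mu_n=\mu\circ\phi_n^{-1}$, where $\phi_n:\mathbb{N}^\infty\to\mathbb{N}_n^\infty$ is the coordinatewise truncation $(\phi_n(\omega))_i=\min\{\omega_i,n\}$; indeed $\mu\circ\phi_n^{-1}$ is a probability measure on $\mathbb{N}_n^\infty$ satisfying conditions (1)--(3) of Definition \ref{def4}. Now if $B\subseteq\mathbb{N}_{n-1}^\infty$ is measurable, then $\phi_n^{-1}(B)=B$: any $\omega$ with $\phi_n(\omega)\in\mathbb{N}_{n-1}^\infty$ must have $\omega_i\leq n-1$ for all $i$, hence lies in $\mathbb{N}_{n-1}^\infty$, where $\phi_n$ is the identity. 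Thus $\mu_n$ and $\mu$ agree on \emph{every} measurable subset of $\mathbb{N}_{n-1}^\infty$, not merely on cylinders, and in particular $\mu_n(\mathbb{N}^\infty\setminus\mathbb{N}_{n-1}^\infty)=\mu(\mathbb{N}^\infty\setminus\mathbb{N}_{n-1}^\infty)=:\varepsilon_n$. Consequently, for every $B\in\mathcal{B}_{\mathbb{N}^\infty}$,
\begin{equation*}
|\mu_n(B)-\mu(B)|=\big|\mu_n(B\setminus\mathbb{N}_{n-1}^\infty)-\mu(B\setminus\mathbb{N}_{n-1}^\infty)\big|\leq\varepsilon_n,
\end{equation*}
and $\varepsilon_n\to 0$ precisely because the sets $\mathbb{N}_m^\infty$ increase and $\mu(\cup_m\mathbb{N}_m^\infty)=1$. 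This bound is uniform in $B$, so it gives setwise convergence at once (indeed convergence in total variation under the standing hypothesis), and the monotone-class scaffolding becomes unnecessary.
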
 
\begin{proof}
It is obvious that for any finite word $\omega\in \mathbb{N}^*$, we have
\begin{center}
$\lim_{n\rightarrow\infty}\mu_n([\omega])=\mu([\omega])$,
\end{center}
as $\mu_n([\omega])=\mu([\omega])$ for any $n$ large enough. Since $\mu$ is supported on $\cup_{n=1}^\infty \mathbb{N}_n^\infty$, the convergence extends to all measurable sets in $\mathbb{N}^\infty$ due to regularity of the measures, or \cite[Theorem 2.3]{FKZ}.
\end{proof}

\begin{rem}\label{rem4}
The convergence of the sequence of concentrating measures $\{\mu_n\}_{n\in\mathbb{N}}$ of a measure $\mu$ is usually not true under the total variation (TV) topology on $\mathcal{M}(\mathbb{N}^\infty)$, except in some trivial cases. 
\end{rem}

The entropy is also inherited by the sequence of concentrating measures $\{\mu_n\}_{n\in\mathbb{N}}$ from the original measure $\mu$, considering Proposition \ref{pro2} as well as the following result.

\begin{lemma}\label{lem12}
For a finite measure $\mu\in\mathcal{\hat{M}}(\mathbb{N}^\infty)$ and its concentrating measures $\{\mu_n\}_{n\in\mathbb{N}}$,  we have
\begin{center}
$\lim_{n\rightarrow\infty}h_{\mu_n}(\sigma)=h_\mu(\sigma)$
\end{center}
with respect to $(\mathbb{N}^\infty, \mathcal{B}_{\mathbb{N}^\infty})$.
\end{lemma}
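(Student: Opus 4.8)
The plan is to realize each concentrating measure as a \emph{factor} of $\mu$ and thereby turn the statement into a continuity statement for the Kolmogorov--Sinai entropy along an increasing sequence of partitions. First I would observe that the recursive prescription in Definition \ref{def4} is exactly the pushforward $\mu_n=(\phi_n)_*\mu$ under the ``capping'' map $\phi_n:\mathbb{N}^\infty\to\mathbb{N}_n^\infty$, $\phi_n(\omega)_k=\min\{\omega_k,n\}$: for a cylinder $[i_1\cdots i_q]\subset\mathbb{N}_n^\infty$ the preimage $\phi_n^{-1}([i_1\cdots i_q])$ forces the $k$-th coordinate to equal $i_k$ when $i_k<n$ and to be $\geq n$ when $i_k=n$, so that $\mu(\phi_n^{-1}([i_1\cdots i_q]))$ is precisely the sum in Definition \ref{def4}(3). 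Since $\phi_n\circ\sigma=\sigma\circ\phi_n$, the map $\phi_n$ intertwines the two shifts, and the pullback of the generating partition $\mathcal{P}_n=\{[i]\}_{i=1}^n$ of $\mathbb{N}_n^\infty$ is the countable partition
\[
\mathcal{Q}_n=\phi_n^{-1}\mathcal{P}_n=\{[1],[2],\ldots,[n-1],\textstyle\bigcup_{i\geq n}[i]\}
\]
of $\mathbb{N}^\infty$.

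Using $H_{\mu_n}(\mathcal{P}_n^q)=H_\mu((\mathcal{Q}_n)^q)$ for every $q$ (pushforward preserves measures of preimages, and $\phi_n^{-1}$ commutes with the joins $\bigvee_{k=0}^{q-1}\sigma^{-k}$), the factor identity $h_{\mu_n}(\sigma)=h_\mu(\sigma,\mathcal{Q}_n)$ follows, where $\mathcal{P}=\{[i]\}_{i=1}^\infty$ is the full cylinder partition and $h_\mu(\sigma)=h_\mu(\sigma,\mathcal{P})$. Thus the lemma reduces to $\lim_{n\to\infty}h_\mu(\sigma,\mathcal{Q}_n)=h_\mu(\sigma,\mathcal{P})$. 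Since $\mathcal{Q}_n\nearrow\mathcal{P}$ (each $\mathcal{Q}_n$ is coarser than $\mathcal{P}$, and they refine as $n$ grows), monotonicity of entropy in the partition gives that $h_\mu(\sigma,\mathcal{Q}_n)$ is nondecreasing and bounded above by $h_\mu(\sigma,\mathcal{P})$; only the matching lower bound in the limit remains.

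For the reverse inequality in the case $h_\mu(\sigma)<\infty$ I would invoke the standard comparison $h_\mu(\sigma,\mathcal{P})\leq h_\mu(\sigma,\mathcal{Q}_n)+H_\mu(\mathcal{P}\mid\mathcal{Q}_n)$ and show the conditional entropy tends to $0$. Because $\mathcal{Q}_n$ is coarser than $\mathcal{P}$ one has $H_\mu(\mathcal{P}\mid\mathcal{Q}_n)=H_\mu(\mathcal{P})-H_\mu(\mathcal{Q}_n)$, and
\[
H_\mu(\mathcal{Q}_n)=-\sum_{i=1}^{n-1}\mu([i])\log\mu([i])-\Big(\sum_{i\geq n}\mu([i])\Big)\log\Big(\sum_{i\geq n}\mu([i])\Big)
\]
converges to $H_\mu(\mathcal{P})=-\sum_{i=1}^\infty\mu([i])\log\mu([i])$, since the lumped tail term vanishes (as $x\log x\to0$ when $x\to0$) and the partial sums converge. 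Combined with the monotone upper bound this settles the case of finite entropy.

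The main obstacle is the case $h_\mu(\sigma)=\infty$ (and, more generally, $H_\mu(\mathcal{P})=\infty$), precisely the phenomenon flagged in the introduction for $\#I=\infty$: the conditional-entropy comparison becomes vacuous and the Shannon--McMillan--Breiman machinery is unavailable. Here I would argue directly with the double-indexed array $a_{n,q}=\tfrac1q H_\mu((\mathcal{Q}_n)^q)$, using that for each fixed $q$ one has $H_\mu((\mathcal{Q}_n)^q)\uparrow H_\mu(\mathcal{P}^q)$ by monotone convergence of Shannon entropy along refining countable partitions (valid with value $+\infty$), while $a_{n,q}$ is nonincreasing in $q$ by subadditivity; the delicate point is that $\sup_n\inf_q a_{n,q}=\inf_q\sup_n a_{n,q}$ is \emph{not} automatic for such monotone arrays, so the interchange of the two limits must be earned. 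In the setting actually used in this paper the difficulty dissolves: under the independence hypothesis (inherited by $\mu_n$ through Lemma \ref{lem15}) together with invariance, the join entropies are additive, whence $h_\mu(\sigma,\mathcal{P})=H_\mu(\mathcal{P})$ and $h_{\mu_n}(\sigma)=H_{\mu_n}(\mathcal{P}_n)$, so the convergence — finite limit or $+\infty$ alike — is exactly the monotone convergence of $-\sum_i\mu_n([i])\log\mu_n([i])$ to $-\sum_i\mu([i])\log\mu([i])$ displayed above.
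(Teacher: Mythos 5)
Your proposal is correct in what it proves, but it takes a genuinely different route from the paper's. The paper argues directly on block entropies: it fixes $k$, shows
$-\frac1k\sum_{\omega\in\mathbb{N}_n^k}\mu_n([\omega])\log\mu_n([\omega])\rightarrow-\frac1k\sum_{\omega\in\mathbb{N}^k}\mu([\omega])\log\mu([\omega])$ as $n\rightarrow\infty$,
and then asserts that this termwise convergence "is enough to force" $\lim_{n\rightarrow\infty}h_{\mu_n}(\sigma)=h_\mu(\sigma)$. You instead realize $\mu_n$ as the pushforward of $\mu$ under the capping map $\phi_n$ (this identification is correct: $\phi_n^{-1}$ of a cylinder of $\mathbb{N}_n^\infty$ is exactly the union of cylinders summed in Definition \ref{def4}(3)), deduce $h_{\mu_n}(\sigma)=h_\mu(\sigma,\mathcal{Q}_n)$ with $\mathcal{Q}_n\nearrow\mathcal{P}$, and run Kolmogorov--Sinai machinery: monotonicity in the partition gives $\limsup_n h_{\mu_n}(\sigma)\leq h_\mu(\sigma)$ for free, and $h_\mu(\sigma,\mathcal{P})\leq h_\mu(\sigma,\mathcal{Q}_n)+H_\mu(\mathcal{P}\mid\mathcal{Q}_n)$ with $H_\mu(\mathcal{P}\mid\mathcal{Q}_n)=H_\mu(\mathcal{P})-H_\mu(\mathcal{Q}_n)\rightarrow 0$ gives the matching lower bound when $h_\mu(\sigma)<\infty$ (note that with the paper's definition, $h_\mu(\sigma)<\infty$ already forces $H_\mu(\mathcal{P})<\infty$, since $H_\mu(\mathcal{P}^q)\geq H_\mu(\mathcal{P})$, so your parenthetical worry about that case is vacuous). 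What your approach buys is precisely the step the paper elides: termwise convergence for a fixed $k$ only bounds $h_{\mu_n}(\sigma)\leq\frac1k H_k(\mu_n)$ from \emph{above}, whereas a lower bound on $h_{\mu_n}(\sigma)=\inf_q\frac1q H_q(\mu_n)$ needs control uniform in $q$, which is exactly what the conditional-entropy inequality supplies. (Both arguments tacitly use shift-invariance of $\mu$ --- subadditivity of block entropies, the conditional entropy inequality --- which is harmless since the lemma is applied only to ergodic $\mu$.)

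Your caution in the case $h_\mu(\sigma)=\infty$ is not pedantry: the interchange of limits you flag can genuinely fail, and the lemma as literally stated (arbitrary finite $\mu$, no independence) is false in that case. Take an irrational rotation $T$ of the circle with Lebesgue measure and a generating countable partition $\{P_i\}_{i\in\mathbb{N}}$ into intervals with $-\sum_i|P_i|\log|P_i|=\infty$, and let $\mu$ be the resulting itinerary measure on $\mathbb{N}^\infty$; it is invariant and ergodic. Then every block entropy of $\mu$ is infinite, so $h_\mu(\sigma)=\infty$, while $\mu_n$ is the itinerary measure of the same rotation with respect to the finite lumped partition $\{P_1,\ldots,P_{n-1},\cup_{i\geq n}P_i\}$, whence $h_{\mu_n}(\sigma)\leq h_{\mathrm{Leb}}(T)=0$ for every $n$. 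So $\sup_n\inf_q$ and $\inf_q\sup_n$ really do differ here, the paper's two-line reduction to absurdity cannot be completed, and an extra hypothesis such as independence of $\mathbf{Y}$ --- under which, as you say, $h_\mu(\sigma)=H_\mu(\mathcal{P})$ and $h_{\mu_n}(\sigma)=H_\mu(\mathcal{Q}_n)$, and the conclusion is monotone convergence of first-level entropies --- is genuinely needed. Since Lemma \ref{lem12} is invoked in Theorems \ref{thm17} and \ref{thm12} only under that independence hypothesis (where Lemma \ref{lem15} applies), your version suffices for everything the paper does; but you should state explicitly that you prove the finite-entropy case for invariant $\mu$ and the infinite-entropy case only under independence, rather than the unconditional statement.
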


\begin{proof}
We justify the result in case of $h_\mu(\sigma)=\infty$ and $h_\mu(\sigma)<\infty$ respectively.

If 
\begin{center}
$h_\mu(\sigma)=\liminf_{k\rightarrow\infty} -\frac{1}{k}\sum_{\omega\in\mathbb{N}^k} \mu([\omega])\log\mu([\omega])=\infty$, 
\end{center}
we have 
\begin{center}
$-\frac{1}{k}\sum_{\omega\in\mathbb{N}^k} \mu([\omega])\log\mu([\omega])=\infty$
\end{center} 
for any $k\in\mathbb{N}$. Then for any $a>0$ and $k\in\mathbb{N}$, we can find $N_{a,k}$ large enough, such that 
\begin{center}
$-\frac{1}{k}\sum_{\omega\in\mathbb{N}_n^k} \mu_n([\omega])\log\mu_n([\omega])>a$
\end{center} 
for any $n>N_{a,k}$. This is enough to force $\lim_{n\rightarrow\infty}h_{\mu_n}(\sigma)=h_\mu(\sigma)=\infty$ by reduction to absurdity. 

Now if 
\begin{center}
$h_\mu(\sigma)=\liminf_{k\rightarrow\infty} -\frac{1}{k}\sum_{\omega\in\mathbb{N}^k} \mu([\omega])\log\mu([\omega])<\infty$, 
\end{center}
then for any small $\epsilon>0$, we can find $K_\epsilon$ large enough, such that
\begin{center}
$h_\mu(\sigma)\leq -\frac{1}{k}\sum_{\omega\in\mathbb{N}^k} \mu([\omega])\log\mu([\omega])<h_\mu(\sigma)+\epsilon$
\end{center}
for any $k> K_\epsilon$. Now for fixed $k> K_\epsilon$, since $-\sum_{\omega\in\mathbb{N}^k} \mu([\omega])\log\mu([\omega])<\infty$, we can find $N_{k,\epsilon}$ large enough such that 
\begin{center}
$\big| \sum_{\omega\in\mathbb{N}^k} \mu([\omega])\log\mu([\omega])-\sum_{\omega\in\mathbb{N}_n^k} \mu_([\omega])\log\mu_n([\omega])\big| <\epsilon$
\end{center}
for any $n>N_{k,\epsilon}$. This is enough to force $\lim_{n\rightarrow\infty}h_{\mu_n}(\sigma)=h_\mu(\sigma)$.
  
\end{proof}

\begin{rem}\label{rem1}
In fact Lemma \ref{lem12} holds for any sequence of measures converging to the finite measure $\mu$ under the setwise topology in $\mathcal{\hat{M}}(X)$, not only for the sequence of its concentrating measures, for any topological ambient space $X$. The version now is enough for our purpose in this work. 
\end{rem}

\section{Setwise approximation of projective measures on the attractors of families of infinite PIFS}\label{sec9}

In this section we aim at proving Theorem \ref{thm17}. After the proof we formulate some interesting applications of Theorem \ref{thm17} in some circumstances. We first show a result which links the upper and lower dimensions of the projective measure $\nu$ on the attractor $J$ of a single PIFS.

\begin{lemma}\label{lem17}
Let 
\begin{center}
$S=\{s_i: X\rightarrow X\}_{i\in I}$
\end{center}
be a parabolic iterated function system with a countable index set $I$. For an ergodic measure $\mu$ on $I^\infty$, let $\nu=\mu\circ\pi^{-1}$ be its projective measure. Then
\begin{center}
$dim_* \nu=dim^* \nu$.
\end{center}
\end{lemma}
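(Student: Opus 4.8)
The plan is to establish the two inequalities separately. The inequality $dim_*\nu\le dim^*\nu$ is immediate from Definition \ref{def3}, since $\nu$ is a probability measure and hence every set of full measure has positive measure, so the infimum defining $dim^*\nu$ ranges over a subfamily of the sets defining $dim_*\nu$. All the content is in the reverse inequality $dim^*\nu\le dim_*\nu$, and the guiding idea is to use ergodicity of $\mu$ to \emph{inflate} an arbitrary set of positive $\nu$-measure and nearly minimal Hausdorff dimension into a set of \emph{full} $\nu$-measure whose dimension is no larger. I would first record the one geometric input: each generator is bi-Lipschitz. For every $i\in I$ the map $s_i\in C^{1+\theta}(X)$ satisfies $0<|s_i'(x)|\le 1$ on the compact interval $X$ (strictly less than $1$ off the indifferent point, and equal to $1$ only at $v$ in the parabolic case), so $|s_i'|$ is continuous and bounded below by some $m_i>0$; the mean value theorem then gives $m_i|x-y|\le|s_i(x)-s_i(y)|\le|x-y|$. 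Hence every finite composition $s_w:=s_{w_1}\circ\cdots\circ s_{w_n}$ with $w\in I^n$ is bi-Lipschitz and therefore preserves Hausdorff dimension, $HD(s_w(A))=HD(A)$ for all $A\subseteq X$ (\cite{Fal2}).

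Next I would fix $\epsilon>0$, choose $A\in\mathcal B$ with $\nu(A)>0$ and $HD(A)<dim_*\nu+\epsilon$, and set $E=\pi^{-1}(A)$, so that $\mu(E)=\nu(A)>0$. Put $G=\bigcup_{n=0}^\infty\sigma^{-n}(E)$. Since $\sigma^{-1}(G)\subseteq G$ and $\mu$ is $\sigma$-invariant, $G$ is invariant modulo $\mu$, and as $\mu(G)\ge\mu(E)>0$ ergodicity forces $\mu(G)=1$. For $\tau\in\sigma^{-n}(E)$ one has $\pi(\sigma^n\tau)\in A$ together with the coding identity
\[
\pi(\tau)=s_{\tau_1}\circ\cdots\circ s_{\tau_n}\big(\pi(\sigma^n\tau)\big)\in s_{\tau|_n}(A),
\]
so that
\[
\pi(G)\subseteq\bigcup_{n=0}^\infty\ \bigcup_{w\in I^n} s_w(A),
\]
a \emph{countable} union of bi-Lipschitz images of $A$. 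Denoting this union by $\tilde B$, countable stability of Hausdorff dimension gives $HD(\tilde B)=\sup_{n,w}HD(s_w(A))=HD(A)<dim_*\nu+\epsilon$. Moreover $G\subseteq\pi^{-1}(\pi(G))\subseteq\pi^{-1}(\tilde B)$, whence $\nu(\tilde B)=\mu(\pi^{-1}\tilde B)\ge\mu(G)=1$, i.e.\ $\tilde B$ carries full measure. Thus $dim^*\nu\le HD(\tilde B)<dim_*\nu+\epsilon$, and letting $\epsilon\to 0$ yields $dim^*\nu\le dim_*\nu$, which together with the trivial inequality finishes the proof.

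The single delicate point — and the step I would flag as the main obstacle, though it is measure-theoretic rather than geometric — is the measurability of the image set, since $\pi(G)$ is a priori only analytic. I would resolve this either by invoking the Lusin–Souslin theorem (each $s_w$ is an injective continuous map of the Polish space $X$, so $s_w(A)$ is Borel whenever $A$ is, making $\tilde B$ genuinely Borel), or, more cheaply, by replacing $\tilde B$ by a Borel superset of the same Hausdorff dimension, which still has full $\nu$-measure because it contains $\tilde B$. It is worth emphasizing that the argument deliberately never compares $\nu$-masses of balls with $\mu$-masses of cylinders, so the \emph{overlaps} of the system and the degeneracy of the derivative at the indifferent point play no role whatsoever: only the bi-Lipschitz property of each $s_i$, the countability of $I$, and ergodicity of $\mu$ are used, which is exactly why the conclusion holds unconditionally for a single system.
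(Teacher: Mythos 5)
Your proof is correct and takes essentially the same route as the paper's: both inflate a positive-measure set $A$ to the full-measure set $\bigcup_{n\ge 0}\sigma^{-n}\pi^{-1}(A)$ via ergodicity, observe through the coding identity that its projection lies in the countable union $\bigcup_{n}\bigcup_{w\in I^n}s_w(A)$ of bi-Lipschitz images of $A$, and conclude by countable stability and bi-Lipschitz invariance of Hausdorff dimension. Your extra care about measurability (Lusin--Souslin) and the explicit $\epsilon$-bookkeeping are refinements of detail, not a different argument.
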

\begin{proof}
To prove the result, it suffices for us to prove 
\begin{equation}\label{eq50}
dim_* \nu\geq dim^* \nu.
\end{equation}
Let $J$ be the attractor of the PIFS. For a set $A\subset J$ with $\nu(A)=\mu\circ\pi^{-1}(A)>0$, consider the set 
\begin{center}
$\pi\circ\sigma^{-1}\circ\pi^{-1}(A)=\pi(\cup_{i\in I}i\pi^{-1}(A))=\cup_{i\in I}\pi(i\pi^{-1}(A))$.
\end{center} 
Note that for any $i\in I$, due to $\pi(\omega)=s_{\omega|_n}\circ\pi\circ\sigma^n(\omega)$ for any $\omega\in I^\infty$, we have
\begin{center}
$\pi(i\pi^{-1}(A))=s_i(A)$,
\end{center}
in which $i\pi^{-1}(A):=\{\omega\in I^\infty: \omega_1=i, \sigma(\omega)\in \pi^{-1}(A)\}$. Since $0< s_i'(x)\leq 1$ for any $i\in I$, we have
\begin{center}
$HD\big(\pi(i\pi^{-1}(A))\big)=HD(A)$
\end{center}
for any $i\in I$. Since $I$ is countable, this forces
\begin{center}
$HD(\pi\circ\sigma^{-1}\circ\pi^{-1}(A))=HD(A)$.
\end{center}
Successively we can show $HD(\pi\circ\sigma^{-n}\circ\pi^{-1}(A))=HD(A)$ for any $n\in\mathbb{N}$, which forces
\begin{center}
$HD(\cup_{n\in\mathbb{N}}\pi\circ\sigma^{-n}\circ\pi^{-1}(A))=HD(A)$.
\end{center}
Since $\mu$ is ergodic with respect to $\sigma$, then

\begin{center}
$
\begin{array}{ll}
& \nu\big(\cup_{n\in\mathbb{N}}\pi\circ\sigma^{-n}\circ\pi^{-1}(A)\big)\\
=&\mu\circ\pi^{-1}\big(\cup_{n\in\mathbb{N}}\pi\circ\sigma^{-n}\circ\pi^{-1}(A)\big)\\
\geq & \mu\big(\cup_{n\in\mathbb{N}}\sigma^{-n}\circ\pi^{-1}(A)\big)\\
=& 1.
\end{array}
$
\end{center}
This implies the inequality (\ref{eq50}).

\end{proof}

Lemma \ref{lem17} may not be true for some invariant measure $\mu$ on the symbolic space. Apply it to families of PIFS, we have the following result instantly.
\begin{corollary}\label{cor5}
Let 
\begin{center}
$\big\{S^{\bold{t}}=\{s_i^{\bold{t}}: X\rightarrow X\}_{i\in I}\big\}_{\bold{t}\in U}$
\end{center}
be a family of parabolic iterated function systems with a countable index set $I$. Let $\mu$ be an ergodic measure on $I^\infty$ and $\nu_\bold{t}$ be the projective measure under $\pi_\bold{t}$ at time $\bold{t}$, then
\begin{center}
$dim_* \nu_\bold{t}=dim^* \nu_\bold{t}$
\end{center}
for any $\bold{t}\in U$.
\end{corollary}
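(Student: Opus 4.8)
The plan is to recognize that Corollary \ref{cor5} is nothing more than a pointwise (in $\bold{t}$) invocation of Lemma \ref{lem17}, since the latter was deliberately stated for an arbitrary single parabolic iterated function system with a countable index set and made no reference to any time parameter whatsoever. First I would fix an arbitrary $\bold{t}\in U$ and observe that the individual system $S^{\bold{t}}=\{s_i^{\bold{t}}: X\rightarrow X\}_{i\in I}$ is itself a parabolic iterated function system with the countable index set $I$ (it belongs to $\Gamma_X(\theta)$ by hypothesis), so it satisfies exactly the standing assumptions imposed on $S$ in Lemma \ref{lem17}.

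Next I would note that the ergodic measure $\mu$ lives on the symbolic space $I^\infty$ and is the \emph{same} for every $\bold{t}$; only the projection map $\pi_{\bold{t}}$, and hence the image measure $\nu_{\bold{t}}=\mu\circ\pi_{\bold{t}}^{-1}$, varies with the time parameter. Consequently, for the fixed $\bold{t}$ the triple $(S^{\bold{t}},\mu,\nu_{\bold{t}})$ is precisely an instance of the triple $(S,\mu,\nu)$ appearing in Lemma \ref{lem17}, and the lemma delivers $dim_*\nu_{\bold{t}}=dim^*\nu_{\bold{t}}$ directly. Since $\bold{t}\in U$ was arbitrary, the equality holds simultaneously for every $\bold{t}\in U$, which is exactly the assertion of the corollary.

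There is essentially no obstacle here: the substantive work has already been carried out inside the proof of Lemma \ref{lem17}, which used only the countability of $I$, the contractivity bound $0<s_i'(x)\leq 1$, and the ergodicity of $\mu$ under the shift $\sigma$ — none of which interacts with the parametrization of the family. The only point requiring a moment's care is the verification that each member $S^{\bold{t}}$ of the family genuinely meets the hypotheses of Lemma \ref{lem17}, and this is immediate from the fact that $S^{\bold{t}}\in\Gamma_X(\theta)$ for every $\bold{t}\in U$ by assumption. Thus the corollary follows \emph{instantly}, as claimed, with the proof amounting to the single sentence ``apply Lemma \ref{lem17} at each $\bold{t}\in U$.''
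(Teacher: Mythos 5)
Your proposal is correct and is exactly the paper's own argument: the paper proves Lemma \ref{lem17} for a single PIFS with countable index set and then states that Corollary \ref{cor5} follows ``instantly'' by applying it at each fixed $\bold{t}\in U$, which is precisely your pointwise invocation. The only minor quibble is that the corollary's hypotheses do not literally mention $\Gamma_X(\theta)$, but this is immaterial since Lemma \ref{lem17} only requires that each $S^{\bold{t}}$ be a PIFS with countable index set, which is assumed.
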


Note that the equality holds everywhere instead of Lebesgue \emph{a.e.} with respect to $\bold{t}\in U$. Now we consider setwisely approximating the projective measure of $\mu$ by the sequence of projective measures of its concentrating measures on the ambient space $X$.  The following result is an instant corollary of Proposition \ref{pro2}. 

\begin{corollary}\label{cor6}
Let 
\begin{center}
$S=\{s_i: X\rightarrow X\}_{i\in \mathbb{N}}$
\end{center}
be an infinite parabolic iterated function system with its attractor $J$. For a probability measure $\mu$ on the symbolic space $\mathbb{N}^\infty$ such that $\mu(\cup_{n=1}^\infty \mathbb{N}_n^\infty)=1$, let $\mu_n$ be its $n$-th concentrating measure  on $\mathbb{N}_n^\infty$ for any $n\in \mathbb{N}$. Then their projections $\{\nu_n=\mu_n\circ\pi^{-1}\}_{n\in\mathbb{N}}$ and $\nu=\mu\circ\pi^{-1}$ satisfy
\begin{center}
$\nu_n\stackrel{s}\rightarrow \nu$ 
\end{center}
as $n\rightarrow\infty$ on $J$. 
\end{corollary}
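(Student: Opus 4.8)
The plan is to transport the setwise convergence $\mu_n \stackrel{s}{\rightarrow} \mu$ on the symbolic space, guaranteed by Proposition \ref{pro2}, across the pushforward by the projection map $\pi$. The only thing one really needs to check is that setwise convergence is preserved by pushforward under a \emph{fixed} measurable map, and here the map $\pi: \mathbb{N}^\infty \rightarrow X$ is genuinely the same for every member of the sequence and for the limit. This is why the statement is advertised as an instant corollary.

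First I would record that $\pi$ is Borel measurable with respect to $\mathcal{B}_{\mathbb{N}^\infty}$. For each $n$, the finite composition $\omega \mapsto s_{\omega|_n}(x_0)$, for any fixed base point $x_0 \in X$, is constant on each $n$-th cylinder and hence $\mathcal{B}_{\mathbb{N}^\infty}$-measurable; by contractivity $\pi$ is the pointwise limit of these maps, and a pointwise limit of measurable maps is measurable. Consequently $\pi^{-1}(B) \in \mathcal{B}_{\mathbb{N}^\infty}$ for every Borel set $B \in \mathcal{B}$. I would also note that $\mu_n$ is supported on $\mathbb{N}_n^\infty$ and that $\pi$ restricted to $\mathbb{N}_n^\infty$ is nothing but the coding map of the finite subsystem $\{s_i\}_{i \in \mathbb{N}_n}$, so the pushforwards $\nu_n = \mu_n \circ \pi^{-1}$ are all defined through one and the same $\pi$, with supports contained in $J$.

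With measurability in hand the conclusion is immediate. Fixing an arbitrary $B \in \mathcal{B}$, since $\pi^{-1}(B) \in \mathcal{B}_{\mathbb{N}^\infty}$, Proposition \ref{pro2} applied to this particular set yields
\[
\nu_n(B) = \mu_n(\pi^{-1}(B)) \longrightarrow \mu(\pi^{-1}(B)) = \nu(B)
\]
as $n \rightarrow \infty$. As $B$ ranges over all of $\mathcal{B}$, this is exactly the assertion that $\nu_n \stackrel{s}{\rightarrow} \nu$ on $J$.

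There is essentially no hard step: the whole content is the abstract fact that setwise convergence pushes forward under a fixed measurable map, combined with measurability of the coding map $\pi$. The one point to stay alert about is precisely that a single $\pi$ (rather than a sequence of subsystem codings reindexed differently) is used throughout, which is what makes the elementary computation above legitimate; this is also why the hypothesis $\mu(\cup_{n=1}^\infty \mathbb{N}_n^\infty)=1$ enters only indirectly, through its role in Proposition \ref{pro2}.
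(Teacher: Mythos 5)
Your proof is correct and is precisely the argument the paper has in mind: the paper gives no proof at all, calling the statement an instant corollary of Proposition \ref{pro2}, and your observation that setwise convergence pushes forward through the single fixed measurable coding map $\pi$ (so that $\nu_n(B)=\mu_n(\pi^{-1}(B))\to\mu(\pi^{-1}(B))=\nu(B)$ for every Borel $B$) is exactly what makes it instant. The measurability check for $\pi$ via pointwise limits of cylinder-constant maps is a sound way to fill in the one detail the paper leaves implicit.
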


As to absolute continuity of the projective measures on the ambient space, we have the following result. 
\begin{lemma}\label{lem18}
Let 
\begin{center}
$S=\{s_i: X\rightarrow X\}_{i\in \mathbb{N}}$
\end{center}
be an infinite parabolic iterated function system with attractor $J$. For an ergodic probability measure $\mu$ on the symbolic space $\mathbb{N}^\infty$, with $\mu(\cup_{n=1}^\infty \mathbb{N}_n^\infty)=1$ and its $n$-th concentrating measure $\mu_n$ also being ergodic on $\mathbb{N}_n^\infty$ for any $n\in \mathbb{N}$, consider their projections $\{\nu_n=\mu_n\circ\pi^{-1}\}_{n\in\mathbb{N}}$ and $\nu=\mu\circ\pi^{-1}$ on $J$.  The sequence of measures $\{\nu_n\}_{n\in\mathbb{N}}$ is absolutely continuous with respect to $\mathfrak{L}^1$ if and only if $\nu$ is absolutely continuous with respect to $\mathfrak{L}^1$.
\end{lemma}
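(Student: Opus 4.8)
The plan is to reduce the statement to a direct application of the two structural results on setwise-convergent sequences from Section~\ref{sec2}, namely Proposition~\ref{thm14} and Proposition~\ref{pro1}, once two preliminary facts are secured: the setwise convergence $\nu_n\stackrel{s}\rightarrow\nu$, and the purity of all the measures involved. First I would invoke Corollary~\ref{cor6}: because $\mu(\cup_{n=1}^\infty\mathbb{N}_n^\infty)=1$, the projected concentrating measures satisfy $\nu_n\stackrel{s}\rightarrow\nu$ on $J$ as $n\rightarrow\infty$. This places us precisely in the framework of Section~\ref{sec2}, with ambient space $X$ and reference measure $\varrho=\mathfrak{L}^1|_X$, which is finite since $X$ is a bounded interval.

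Next I would record the purity dichotomy. Ergodicity of $\mu$ and of every $\mu_n$ implies, in particular, shift-invariance; by the law of pure types for projections of invariant measures (\cite{JW}, as quoted in the introduction), each of $\nu$ and each $\nu_n$ is of pure type with respect to $\mathfrak{L}^1$, that is, either absolutely continuous or singular. This is exactly the hypothesis that Proposition~\ref{pro1} requires, and it is the feature that Example~\ref{exm7} shows cannot be dispensed with.

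With these in hand, the two implications follow formally. For the forward direction, if the sequence $\{\nu_n\}$ is absolutely continuous with respect to $\mathfrak{L}^1$, then by definition it contains a subsequence of measures each absolutely continuous with respect to $\mathfrak{L}^1$; Proposition~\ref{thm14}, applied with $\varrho=\mathfrak{L}^1|_X$, then yields that the setwise limit $\nu$ is absolutely continuous. For the converse, suppose $\nu$ is absolutely continuous; since $\nu$ is a probability measure we have $\nu(J)=1>0$, and we have just seen that every $\nu_n$ is of pure type, so Proposition~\ref{pro1} produces an integer $N$ with $\nu_n$ absolutely continuous for all $n\geq N$. In particular $\{\nu_n\}$ then contains an absolutely continuous subsequence, hence is absolutely continuous in the sense of the definition.

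The only genuinely load-bearing step is the purity dichotomy invoked in the second paragraph: absolute continuity and singularity are not stable under setwise limits for mixed-type measures, and without knowing a priori that each $\nu_n$ is of pure type the converse direction is simply false, as Example~\ref{exm7} illustrates. Thus the substantive content — the passage from shift-invariance to purity of the projections — is imported from the cited dichotomy, while everything else is bookkeeping over Corollary~\ref{cor6}, Proposition~\ref{thm14}, and Proposition~\ref{pro1}.
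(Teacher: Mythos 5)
Your proof is correct and takes essentially the same route as the paper's own: setwise convergence from Corollary~\ref{cor6}, purity of the projections of the ergodic (hence invariant) measures via the law of pure types, and then Proposition~\ref{thm14} and Proposition~\ref{pro1} for the two implications. You merely spell out the hypothesis checks (purity of each $\nu_n$ and $\nu(X)>0$) that the paper's terser argument leaves implicit.
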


\begin{proof}
First note that all the projective measures $\{\nu_n\}_{n\in\mathbb{N}}$ and $\nu$ are of pure type since they are all projected from ergodic measures on the symbolic spaces. By Corollary \ref{cor6} we have  
\begin{center}
$\nu_n\stackrel{s}\rightarrow \nu$ 
\end{center}
as $n\rightarrow\infty$ on $J$. Then the  absolute continuity of the sequence of measures $\{\nu_n\}_{n\in\mathbb{N}}$ and the absolute continuity of $\nu$ with respect to $\mathfrak{L}^1$ is  equivalent to each other in virtue of Proposition \ref{thm14} and  Proposition \ref{pro1}.
\end{proof}

We will only use the result that absolute continuity of the sequence of measures $\{\nu_n\}_{n\in\mathbb{N}}$ implies the absolute continuity of $\nu$ with respect to $\mathfrak{L}^1$ in the following. This is true even if $\{\mu_n\}_{n\in\mathbb{N}}$ are not ergodic according to Proposition \ref{thm14}. Equipped with all the above results, now we are well prepared to prove Theorem \ref{thm17}.\\

Proof of Theorem \ref{thm17}:\\

\begin{proof}
First note that since $\big\{S^\bold{t}=\{s_i^{\bold{t}}: X\rightarrow X\}_{i\in \mathbb{N}}\in \Gamma_X(\theta)\big\}_{\bold{t}\in U}$,  there exist sequences of positive real numbers $\{0<\gamma_n, u_n<1\}_{n\in\mathbb{N}}$, $\{M_n>0\}_{n\in\mathbb{N}}$ (it is possible that these sequences satisfy $\sup\{\gamma_n\}_{n\in\mathbb{N}}=1$, $\inf\{u_n\}_{n\in\mathbb{N}}=0$ and $\sup\{M_n\}_{n\in\mathbb{N}}=\infty$) and a sequence of open neighbourhoods $V_n$ of $v$ (it is also possible that $\cap_{n=1}^\infty V_n=\{v\}$), such that its $n$-th family of truncates 
\begin{center}
$\big\{S_n^\bold{t}=\{s_i^{\bold{t}}: X\rightarrow X\}_{i\in \mathbb{N}_n}\in \Gamma_X(\theta, V_n, \gamma_n, u_n, M_n)\big\}_{\bold{t}\in U}$
\end{center}
for any fixed $n\in\mathbb{N}$. Denote their attractors by $\{J_{n,\bold{t}}\subset J_{\bold{t}}\}_{n\in\mathbb{N},\bold{t}\in U}$. Consider the projections of the concentrating measures 
\begin{center}
$\nu_{n,\bold{t}}=\mu_n\circ\pi_{\bold{t}}^{-1}$ 
\end{center}
on the limit sets for $\bold{t}\in U, n\in\mathbb{N}$. Since the sequence of infinite random variables
in $\bold{Y}$ is independent under the ergodic law $\mu$, the concentrating measures $\{\mu_n\}_{n\in\mathbb{N}}$ are also ergodic with respect to the shift map $\sigma$ in virtue of Lemma \ref{lem16}.  Moreover, according to Lemma \ref{lem12}, we have
\begin{center}
$h_{\mu_n}(\sigma)>0$
\end{center}
for $n$ large enough. Now apply the \cite[Theorem 2.3 (i)]{SSU1} and Corollary \ref{cor5} to the ergodic projections $\{\nu_{n,\bold{t}}\}_{\bold{t}\in U}$ originated from the family of finite PIFS
\begin{center}
$\big\{S_n^\bold{t}=\{s_i^{\bold{t}}: X\rightarrow X\}_{i\in \mathbb{N}_n}\in \Gamma_X(\theta, V_n, \gamma_n, u_n, M_n)\big\}_{\bold{t}\in U}$
\end{center}
satisfying the continuity and transversality condition, we have
\begin{center}
$dim_* \nu_{n,\bold{t}}=dim^* \nu_{n,\bold{t}}=\min\Big\{ \cfrac{h_{\mu_n}(\sigma)}{\lambda^{\bold{t}}_{\mu_n}(\sigma)},1\Big\}$
\end{center}
for Lebesgue \emph{a.e.} $\bold{t}\in U$ and any $n\in\mathbb{N}$ large enough. In virtue of Corollary \ref{cor6}, we have
\begin{center}
$\nu_{n,\bold{t}}\stackrel{s}\rightarrow \nu_\bold{t}$ 
\end{center}
as $n\rightarrow\infty$ on $J$ for any $\bold{t}\in U$. Now apply the semi-continuity result Theorem \ref{thm13} to the setwisely convergent sequence of projective measures $\{\nu_{n,\bold{t}}\}_{\bold{t}\in U}$, we have

\begin{equation}\label{eq54}
\begin{array}{ll}
\limsup_{n\rightarrow\infty}\min\Big\{\cfrac{h_{\mu_n}(\sigma)}{\lambda^{\bold{t}}_{\mu_n}(\sigma)},1\Big\}=\limsup_{n\rightarrow\infty}dim_*\nu_{n,\bold{t}}\leq dim_*\nu_{\bold{t}}\\
= dim^*\nu_{\bold{t}}\leq \liminf_{n\rightarrow\infty} dim^*\nu_{n,\bold{t}}=\liminf_{n\rightarrow\infty}\min\Big\{\cfrac{h_{\mu_n}(\sigma)}{\lambda^{\bold{t}}_{\mu_n}(\sigma)},1\Big\}
\end{array}
\end{equation}
for Lebesgue \emph{a.e.} $\bold{t}\in U$ and any $n\in\mathbb{N}$ large enough. Then Theorem \ref{thm17} (i) is proved by letting $n\rightarrow\infty$ in (\ref{eq54}), since the Lebesgue measure of unions of countablly many Lebesgue null sets is still null.

To prove Theorem \ref{thm17} (ii), for fixed $n\in\mathbb{N}$, apply \cite[Theorem 2.3 (ii)]{SSU1} to the ergodic projections $\{\nu_{n,\bold{t}}\}_{\bold{t}\in U}$ originated from the $n$-th truncated family of PIFS $\{S_n^\bold{t}\}_{\bold{t}\in U}$ satisfying the continuity and transversality condition, we see that $\nu_{n,\bold{t}}$ is absolutely continuous for \emph{a.e.} $\bold{t}\in\Big\{\bold{t}\in U: \cfrac{h_{\mu_n}(\sigma)}{\lambda^{\bold{t}}_{\mu_n}(\sigma)}>1\Big\}$. So in virtue of Lemma \ref{lem18},  $\nu_{\bold{t}}$ is absolutely continuous for \emph{a.e.} $\bold{t}$ in
\begin{center}
$\cup_{\{n_j\}_{j=1}^\infty \mbox{ is an infinite subsequence of } \{n\}_{n=1}^\infty} \cap_{j=1}^\infty\Big\{\bold{t}\in U: \cfrac{h_{\mu_{n_j}}(\sigma)}{\lambda^{\bold{t}}_{\mu_{n_j}}(\sigma)}>1\Big\}:=U_a$.
\end{center}
It is easy to see that  
\begin{center}
$\Big\{\bold{t}\in U: \limsup_{n\rightarrow\infty}\cfrac{h_{\mu_n}(\sigma)}{\lambda^{\bold{t}}_{\mu_n}(\sigma)}>1\Big\}\subset U_a$. 
\end{center}
This justifies Theorem \ref{thm17} (ii).
\end{proof}

One can see from the above proof that the limit of the sequence
\begin{center}
$\Big\{\min\big\{ \cfrac{h_{\mu_n}(\sigma)}{\lambda^{\bold{t}}_{\mu_n}(\sigma)},1\big\}\Big\}_{n=1}^\infty$
\end{center}
always exists for Lebesgue \emph{a.e.} $\bold{t}\in U$. More interesting  results are possible according to different \emph{a.e.} limit behaviours of the sequence $\Big\{\cfrac{h_{\mu_n}(\sigma)}{\lambda^{\bold{t}}_{\mu_n}(\sigma)}\Big\}_{\bold{t}\in U, n\in\mathbb{N}}$.

In the following of the section we apply Theorem \ref{thm17} to various families of PIFS to deduce some interesting results. The first application is on the Hausdorff dimension of the attractors of a family of infinite PIFS. 

\begin{corollary}\label{cor4}
Let 
\begin{center}
$\big\{S^\bold{t}=\{s_i^{\bold{t}}: X\rightarrow X\}_{i\in \mathbb{N}}\in \Gamma_X(\theta)\big\}_{\bold{t}\in U}$
\end{center}
be a family of infinite parabolic iterated function systems satisfying the continuity and transversality condition with respect to the vector-time parameter $\bold{t}\in U$.  For an ergodic probability measure $\mu$ on the symbolic space $\mathbb{N}^\infty$ with positive entropy $h_\mu(\sigma)$ and $\mu(\cup_{n=1}^\infty \mathbb{N}_n^\infty)=1$, let $\mu_n$ be its $n$-th concentrating measure for any $n\in\mathbb{N}$. If the sequence of infinite random variables in $\bold{Y}$ under the law $\mu$ is independent, then 

\begin{enumerate}[(i).]
\item $HD(J_{\bold{t}})\geq\lim_{n\rightarrow\infty}\min\Big\{ \cfrac{h_{\mu_n}(\sigma)}{\lambda^{\bold{t}}_{\mu_n}(\sigma)},1\Big\}$ for Lebesgue \emph{a.e.} $\bold{t}\in U$.

\item $\mathfrak{L}^1(J_{\bold{t}})>0$ for Lebesgue \emph{a.e.} $\bold{t}\in \Big\{\bold{t}: \limsup_{n\rightarrow\infty}\big\{\cfrac{h_{\mu_n}(\sigma)}{\lambda^{\bold{t}}_{\mu_n}(\sigma)}\big\}>1\Big\}$.
\end{enumerate}
\end{corollary}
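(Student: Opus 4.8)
The plan is to obtain both parts as immediate consequences of Theorem \ref{thm17}, by translating the statements about the projective measure $\nu_{\bold{t}}=\mu\circ\pi_{\bold{t}}^{-1}$ into statements about its support, the attractor $J_{\bold{t}}=\pi_{\bold{t}}(\mathbb{N}^\infty)$. The only genuine work is to connect the abstract full-measure Borel sets appearing in Definition \ref{def3} to the concrete set $J_{\bold{t}}$, and I would dispose of this once and for all at the start.

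For part (i), I would first note that $\nu_{\bold{t}}$ is a Borel probability measure carried by $J_{\bold{t}}$, since $\mu$ is a probability on $\mathbb{N}^\infty$ and $\pi_{\bold{t}}^{-1}(J_{\bold{t}})=\mathbb{N}^\infty$, whence $\nu_{\bold{t}}(J_{\bold{t}})=1=\nu_{\bold{t}}(X)$. The attractor $J_{\bold{t}}$, being the continuous image of the Polish space $\mathbb{N}^\infty$ under $\pi_{\bold{t}}$, is analytic and therefore $\nu_{\bold{t}}$-measurable; by inner regularity of the finite Borel measure $\nu_{\bold{t}}$ on the interval $X$ there is an $F_\sigma$ set $B\subseteq J_{\bold{t}}$ with $\nu_{\bold{t}}(B)=\nu_{\bold{t}}(X)=1$. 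Since $B$ is Borel and of full $\nu_{\bold{t}}$-measure, Definition \ref{def3} gives $dim^*\nu_{\bold{t}}\leq HD(B)\leq HD(J_{\bold{t}})$ by monotonicity of $HD$. Combining this with the identity $dim^*\nu_{\bold{t}}=\lim_{n\to\infty}\min\{h_{\mu_n}(\sigma)/\lambda^{\bold{t}}_{\mu_n}(\sigma),1\}$ furnished by Theorem \ref{thm17}(i), valid for Lebesgue \emph{a.e.} $\bold{t}\in U$, yields precisely the claimed lower bound on $HD(J_{\bold{t}})$.

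For part (ii), I would invoke Theorem \ref{thm17}(ii): for Lebesgue \emph{a.e.} $\bold{t}$ in the set $\{\bold{t}:\limsup_{n}h_{\mu_n}(\sigma)/\lambda^{\bold{t}}_{\mu_n}(\sigma)>1\}$ the measure $\nu_{\bold{t}}$ is absolutely continuous with respect to $\mathfrak{L}^1$. Taking again the Borel set $B\subseteq J_{\bold{t}}$ with $\nu_{\bold{t}}(B)=1>0$, absolute continuity rules out $\mathfrak{L}^1(B)=0$ (which would force $\nu_{\bold{t}}(B)=0$), so $\mathfrak{L}^1(B)>0$; since $B\subseteq J_{\bold{t}}$ and $J_{\bold{t}}$ is Lebesgue measurable (again by analyticity), this gives $\mathfrak{L}^1(J_{\bold{t}})\geq\mathfrak{L}^1(B)>0$, as desired.

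No new machinery beyond Theorem \ref{thm17} is needed, so I do not expect a serious obstacle; the subtlest point is simply ensuring measurability of $J_{\bold{t}}$ so that $\nu_{\bold{t}}(J_{\bold{t}})$ and $\mathfrak{L}^1(J_{\bold{t}})$ are meaningful, which the analyticity and inner-regularity remark settles. I would only remark in passing that the exceptional Lebesgue-null sets arising in (i) and in (ii) need not coincide, but since both conclusions are asserted only almost everywhere, taking the union of the two null sets causes no difficulty.
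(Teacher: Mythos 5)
Your proof is correct and takes essentially the same route as the paper, whose entire proof is the one-line observation that both parts follow from Theorem \ref{thm17} together with the fact that $\nu_{\bold{t}}$ is carried by $J_{\bold{t}}$. Your additional care (analyticity of $J_{\bold{t}}$ and inner regularity to produce a Borel set $B\subseteq J_{\bold{t}}$ of full $\nu_{\bold{t}}$-measure) merely makes rigorous the measurability point the paper leaves implicit, and is in fact slightly more precise, since for an infinite system $J_{\bold{t}}$ need not be closed.
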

\begin{proof}
These results follow directly from Theorem \ref{thm17} and the fact that $supp(\nu_{\bold{t}})\subset J_{\bold{t}}$ for any $\bold{t}\in U$.
\end{proof} 
 
More interesting corollaries on the Hausdorff dimension of the attractors can be formulated from Theorem \ref{thm17}, considering different limit behaviours of the sequence $\Big\{\cfrac{h_{\mu_n}(\sigma)}{\lambda^{\bold{t}}_{\mu_n}(\sigma)}\Big\}_{\bold{t}\in U, n\in\mathbb{N}}$ and the fact that $supp(\nu_{\bold{t}})$ is always contained in $J_{\bold{t}}$.
 
Now we apply Theorem \ref{thm17} or Corollary \ref{cor4} to some families of infinite PIFS with \emph{exploding} measures $\mu$ on $\mathbb{N}^\infty$, that is, measures $\mu$ satisfying 
\begin{center}
$h_\mu(\sigma)=\infty$.
\end{center} 
There will be some stronger results available in due courses.

\begin{corollary}\label{thm16}
Let 
\begin{center}
$\big\{S^\bold{t}=\{s_i^{\bold{t}}: X\rightarrow X\}_{i\in \mathbb{N}}\in \Gamma_X(\theta, V, \gamma, u, M)\big\}_{\bold{t}\in U}$
\end{center}
be a family of infinite parabolic iterated function systems satisfying the continuity and transversality condition with respect to the vector-time parameter $\bold{t}\in U$ for some fixed open neighbourhood $V$ of $v$ and some fixed parameters $0<\theta,\gamma, u<1, M>0$. For an ergodic exploding probability measure $\mu$ on the symbolic space $\mathbb{N}^\infty$ with $\mu(\cup_{n=1}^\infty \mathbb{N}_n^\infty)=1$, if the sequence of infinite random variables in $\bold{Y}$ under the law $\mu$ is independent, then 
\begin{enumerate}[(i).]
\item For Lebesgue \emph{a.e.} $\bold{t}\in U$, $dim_* \nu_\bold{t}=dim^* \nu_\bold{t}=1$,

\item $\nu_\bold{t}$ is absolutely continuous for Lebesgue \emph{a.e.} $\bold{t}\in U$,
\end{enumerate}
in which $\nu_\bold{t}=\mu\circ\pi_\bold{t}^{-1}$ is the  projective measure at time $\bold{t}$.
\end{corollary}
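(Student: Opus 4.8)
The plan is to deduce this strengthened conclusion directly from Theorem \ref{thm17}, the gain coming entirely from the \emph{fixed}-parameter regularity $\Gamma_X(\theta, V, \gamma, u, M)$ together with the exploding hypothesis $h_\mu(\sigma)=\infty$. First I would check that Theorem \ref{thm17} applies verbatim: since $\Gamma_X(\theta, V, \gamma, u, M)\subset\Gamma_X(\theta)$, the family lies in $\Gamma_X(\theta)$ and satisfies the continuity and transversality conditions, while $\mu$ is ergodic with independent infinite random variables in $\bold{Y}$, with $\mu(\cup_{n=1}^\infty \mathbb{N}_n^\infty)=1$ and positive entropy (indeed $h_\mu(\sigma)=\infty>0$). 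So the conclusions of Theorem \ref{thm17} are available, and it remains only to evaluate the relevant limits at \emph{every} $\bold{t}\in U$.

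The key step is a uniform upper bound on the Lyapunov exponents of the concentrating measures. Because the entire infinite system lies in $\Gamma_X(\theta, V, \gamma, u, M)$ with a \emph{fixed} lower derivative bound $u$, condition (3) gives $u\leq |s_i'(x)|\leq 1$ for all $i\in\mathbb{N}$ and $x\in X$, hence $0\leq -\log|s_i'(x)|\leq \log(1/u)$. Integrating this bound against the probability measure $\mu_n$ in the definition of $\lambda^{\bold{t}}_{\mu_n}(\sigma)$ yields
\[
0\leq \lambda^{\bold{t}}_{\mu_n}(\sigma)\leq \log(1/u)
\]
uniformly in $n\in\mathbb{N}$ and $\bold{t}\in U$. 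On the other hand, Lemma \ref{lem12} gives $\lim_{n\rightarrow\infty}h_{\mu_n}(\sigma)=h_\mu(\sigma)=\infty$. Combining the two estimates,
\[
\cfrac{h_{\mu_n}(\sigma)}{\lambda^{\bold{t}}_{\mu_n}(\sigma)}\geq \cfrac{h_{\mu_n}(\sigma)}{\log(1/u)}\longrightarrow\infty
\]
as $n\rightarrow\infty$, for every $\bold{t}\in U$. (Here one notes $\lambda^{\bold{t}}_{\mu_n}(\sigma)>0$ for $n$ large, guaranteed by the presence of genuinely contracting hyperbolic maps together with $h_{\mu_n}(\sigma)>0$, so the ratios are well defined in the relevant range.) Consequently $\lim_{n\rightarrow\infty}\min\{h_{\mu_n}(\sigma)/\lambda^{\bold{t}}_{\mu_n}(\sigma),1\}=1$ and $\limsup_{n\rightarrow\infty}h_{\mu_n}(\sigma)/\lambda^{\bold{t}}_{\mu_n}(\sigma)=\infty>1$ at \emph{every} $\bold{t}\in U$.

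Statement (i) then follows immediately from Theorem \ref{thm17}(i), since the a.e.-limit there equals $1$ pointwise in $U$; and statement (ii) follows from Theorem \ref{thm17}(ii), because the exceptional set $\{\bold{t}:\limsup_n h_{\mu_n}(\sigma)/\lambda^{\bold{t}}_{\mu_n}(\sigma)>1\}$ on which absolute continuity holds a.e.\ has now been shown to be all of $U$. I do not anticipate any genuine obstacle in this argument; the single point deserving emphasis is that the \emph{fixed}-parameter hypothesis $\Gamma_X(\theta, V, \gamma, u, M)$ — rather than the $n$-dependent truncation bounds $(\gamma_n, u_n, M_n, V_n)$ appearing inside the proof of Theorem \ref{thm17} — is precisely what promotes the bound on $\lambda^{\bold{t}}_{\mu_n}(\sigma)$ from holding for each individual $n$ to holding \emph{uniformly} in $n$, which is exactly the mechanism that forces the ratio to blow up and thereby removes the dimension drop.
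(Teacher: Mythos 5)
Your proposal is correct and follows essentially the same route as the paper: both use condition (3) of $\Gamma_X(\theta, V, \gamma, u, M)$ to bound the Lyapunov exponents by $-\log u$ uniformly, invoke Lemma \ref{lem12} to get $h_{\mu_n}(\sigma)\rightarrow\infty$, and then read off both conclusions from Theorem \ref{thm17}. Your version is if anything slightly more careful, since you state the uniform bound directly for $\lambda^{\bold{t}}_{\mu_n}(\sigma)$ (the quantity actually appearing in Theorem \ref{thm17}), whereas the paper writes it for $\lambda^{\bold{t}}_{\mu}(\sigma)$ and leaves the transfer to the concentrating measures implicit.
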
   

\begin{proof}
Note that since 
\begin{equation}\label{eq40}
\inf_{n\in \mathbb{N}, x\in X}\{|(s_n^{\bold{t}})'(x)|\}\geq u
\end{equation}
for any $\bold{t}\in U$, we have
\begin{equation}\label{eq41}
\lambda^\bold{t}_\mu(\sigma)\leq -\log u.
\end{equation}
Apply Theorem \ref{thm17} to the projective measures $\{\nu_\bold{t}\}_{\bold{t}\in U}$ originated from the family of infinite parabolic iterated function systems 
\begin{center}
$\big\{S^\bold{t}=\{s_i^{\bold{t}}: X\rightarrow X\}_{i\in \mathbb{N}}\in \Gamma_X(\theta, V, \gamma, u, M)\big\}_{\bold{t}\in U}$
\end{center}
satisfying the continuity and transversality condition (note that $\Gamma_X(\theta, V, \gamma, u, M)\subset \Gamma_X(\theta)$), the two conclusions follow instantly in virtue of (\ref{eq41}), $h_\mu(\sigma)=\infty$ and Lemma \ref{lem12}.

\end{proof}

\section{Dimensional estimates of the  exceptional parameters}

This section is devoted to estimation on the upper bound of the local (global) Hausdorff dimension of the  exceptional parameters. We mean to prove Theorem \ref{thm12} here. We start by showing several preceding results on the limit behaviours of the (families of truncated) finite sub-systems of an (family of) infinite PIFS.

\begin{lemma}\label{lem14}
Let 
\begin{center}
$S=\{s_i: X\rightarrow X\}_{i\in \mathbb{N}}\in\Gamma_X(\theta)$
\end{center}
be a PIFS.  Let $\mu$ be a finite measure on $\mathbb{N}^\infty$ with its concentrating measures $\{\mu_n\}_{n\in\mathbb{N}}$ on $\{\mathbb{N}_n^\infty\}_{n\in\mathbb{N}}$ respectively. Considering the sequence of Lyapunov exponents  $\{\lambda_{\mu_n}(\sigma)\}_{n=1}^\infty$ of the truncated systems $\big\{S_n=\{s_i: X\rightarrow X\}_{i\in \mathbb{N}_n}\big\}_{n\in\mathbb{N}}$, we have
\begin{center}
$\lim_{n\rightarrow\infty} \lambda_{\mu_n}(\sigma)=\lambda_{\mu}(\sigma)$.
\end{center}
\end{lemma}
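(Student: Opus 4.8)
The plan is to recognise that, once the maps are truncated, the two Lyapunov exponents are integrals of \emph{one and the same} potential against two different measures, and then to run a setwise-convergence argument on that common potential. First I would observe that the truncated system $S_n=\{s_i\}_{i\in\mathbb{N}_n}$ reuses the very maps of $S$, so its projection is nothing but the restriction $\pi_n=\pi|_{\mathbb{N}_n^\infty}$. Consequently, writing the nonnegative measurable potential
$$F(\omega)=-\log|s_{\omega_1}'(\pi(\sigma(\omega)))|\ge 0,$$
one has $\lambda_{\mu_n}(\sigma)=\int_{\mathbb{N}^\infty}F\,d\mu_n$ and $\lambda_\mu(\sigma)=\int_{\mathbb{N}^\infty}F\,d\mu$, where $\mu_n$ is viewed as a measure on $\mathbb{N}^\infty$ carried by $\mathbb{N}_n^\infty$. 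By Proposition \ref{pro2} we have $\mu_n\stackrel{s}{\rightarrow}\mu$, so the lemma is exactly the assertion that $\int F\,d\mu_n\to\int F\,d\mu$. A useful bookkeeping fact is that $F$ is bounded on each first-level cylinder, $F\le c_i:=-\log\big(\inf_{x\in X}|s_i'(x)|\big)<\infty$ on $[i]$, since each $s_i$ is $C^{1+\theta}$ on the compact interval $X$ with $|s_i'|>0$.

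Next I would dispose of the lower bound, which costs nothing. For fixed $M$ the function $\min\{F,M\}$ is bounded and measurable, so setwise convergence gives $\int\min\{F,M\}\,d\mu_n\to\int\min\{F,M\}\,d\mu$; letting $M\to\infty$ and using monotone convergence yields
$$\liminf_{n\to\infty}\lambda_{\mu_n}(\sigma)\ge\lambda_\mu(\sigma).$$
In particular the case $\lambda_\mu(\sigma)=\infty$ is already settled, in direct analogy with the exploding case of Lemma \ref{lem12}.

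For the reverse inequality, assuming $\lambda_\mu(\sigma)<\infty$, I would split
$$\lambda_{\mu_n}(\sigma)=\sum_{i=1}^{n-1}\int_{[i]}F\,d\mu_n+\int_{[n]}F\,d\mu_n.$$
For each fixed $i$ the integrand $F\mathbf{1}_{[i]}$ is bounded, and $\mu_n([i])=\mu([i])$ once $n>i$, so setwise convergence gives $\int_{[i]}F\,d\mu_n\to\int_{[i]}F\,d\mu$, whose sum over $i$ equals $\lambda_\mu(\sigma)$. The entire difficulty is concentrated in the last, ``diagonal'' term, for which the concentrating measure piles the whole tail mass onto the single symbol $n$:
$$0\le\int_{[n]}F\,d\mu_n\le c_n\,\mu_n([n])=c_n\sum_{j\ge n}\mu([j]).$$

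The main obstacle is to show that this diagonal term, together with the tail $\sum_{i>N}\int_{[i]}F\,d\mu_n$, is uniformly (in $n$) small — that is, that $F$ is uniformly integrable against the family $\{\mu_n\}$. Here I would exploit $\lambda_\mu(\sigma)<\infty$, which forces $\sum_i\int_{[i]}F\,d\mu<\infty$ and hence a quantitative decay of the tail mass $\sum_{j\ge n}\mu([j])$ fast enough to beat the possible growth of $c_n$. The delicate point is precisely that the concentrating measure evaluates the potential through the single map $s_n$ while aggregating the mass originally distributed over all symbols $j\ge n$; controlling this aggregation is exactly where the structure of the concentrating measures in Definition \ref{def4} must be used carefully. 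Combining the two inequalities then yields $\lambda_{\mu_n}(\sigma)\to\lambda_\mu(\sigma)$.
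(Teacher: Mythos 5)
Your reduction of both Lyapunov exponents to integrals of the single potential $F(\omega)=-\log|s_{\omega_1}'(\pi(\sigma(\omega)))|$ is correct (equivalently, $\mu_n=\mu\circ T_n^{-1}$ for the symbol-wise truncation map $T_n$ that replaces every entry $\geq n$ by $n$), and your lower bound is complete: setwise convergence applied to the bounded measurable functions $\min\{F,M\}$, followed by monotone convergence in $M$, gives $\liminf_{n\to\infty}\lambda_{\mu_n}(\sigma)\geq\lambda_\mu(\sigma)$ and in particular settles the exploding case. In substance this is the half of the argument the paper actually writes out; the paper lower-bounds $\lambda_{\mu_n}(\sigma)$ by the partial sums $\sum_{i\leq N_*}\int_{[i]}F\,d\mu_n$ and passes to the limit cylinder by cylinder, but the two devices play the same role. (One small caveat common to both: invoking Proposition \ref{pro2} formally requires $\mu(\cup_{m}\mathbb{N}_m^\infty)=1$, which the lemma does not state; the cylinder-wise convergence can, however, be obtained directly by dominated convergence on each $[i]$.)

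The genuine gap is the reverse inequality, and it cannot be closed along the route you indicate. You correctly isolate the diagonal term $\int_{[n]}F\,d\mu_n\leq c_n\sum_{j\geq n}\mu([j])$ as the obstacle, but then assert that $\lambda_\mu(\sigma)<\infty$ forces the tail mass to decay fast enough to beat the growth of $c_n$. That implication is false: finiteness of $\lambda_\mu(\sigma)=\sum_i\int_{[i]}F\,d\mu$ constrains $c_n=-\log\inf_X|s_n'|$ only through the weight $\mu([n])$, which may be minuscule, whereas the tail mass that $\mu_n$ piles onto the symbol $n$ is carried by \emph{other} symbols whose contraction rates are moderate. Concretely, take affine hyperbolic maps with $|s_{2j}'|\equiv e^{-4^j}$ and $|s_{2j+1}'|\equiv\frac{1}{2}$ (all mapping $X$ into $X^o\setminus\{v\}$), and let $\mu=c\sum_{j\geq1}\big(8^{-j}\delta_{(2j)^\infty}+2^{-j}\delta_{(2j+1)^\infty}\big)$, where $k^\infty$ denotes the constant word $kkk\cdots$. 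Then $\mu$ is a finite measure with $\mu(\cup_m\mathbb{N}_m^\infty)=1$ and $\lambda_\mu(\sigma)=c\sum_{j}\big(8^{-j}4^j+2^{-j}\log 2\big)<\infty$, yet $\lambda_{\mu_{2j}}(\sigma)\geq 4^j\,\mu_{2j}([2j])\geq 4^j\cdot c\,2^{-j}=c\,2^j\to\infty$. So no uniform integrability of $F$ against $\{\mu_n\}$ follows from $\lambda_\mu(\sigma)<\infty$ alone; your decomposition can only be completed under extra hypotheses that tame the diagonal term, for instance a uniform lower bound $u\leq|s_i'|$ as in the class $\Gamma_X(\theta,V,\gamma,u,M)$ (then $c_n\leq-\log u$ and the term dies with the tail mass), or monotonicity of the contraction rates $c_n$. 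Be aware that the paper's own proof treats only the case $\lambda_\mu(\sigma)=\infty$ and dismisses the finite case as ``similar'' and ``left to the interested readers''; your sketch has the merit of locating exactly where that dismissal hides the difficulty, but as written the finite case remains unproved, and the mechanism you propose for it provably fails.
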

\begin{proof}
First we represent $\lambda_{\mu}(\sigma)$ as
\begin{center}
$\lambda_{\mu}(\sigma)=\sum_{i=1}^\infty -\int_{[i]}\log|s_i'(\pi\circ\sigma(\omega))|d\mu(\omega)$.
\end{center}
Note that 
\begin{equation}\label{eq39}
-\int_{[i]}\log|s_i'(\pi\circ\sigma(\omega))|d\mu(\omega)<\infty
\end{equation}
for any $1\leq i<\infty$.
We distinguish the case $\lambda_{\mu}(\sigma)=\infty$ from the case $\lambda_{\mu}(\sigma)<\infty$. If $\lambda_{\mu}(\sigma)=\infty$, we will claim that for any $a>0$, there exists an integer $N_a$ large enough, such that for any $n>N_a$, we have
\begin{center}
$\lambda_{\mu_n}(\sigma)>a$.
\end{center}
This means $\lim_{n\rightarrow\infty} \lambda_{\mu_n}(\sigma)=\infty$. To see this, since $\lambda_{\mu}(\sigma)=\infty$, for any $a>0$, there exists $N_*$ large enough, such that 
\begin{center}
$\sum_{i=1}^{N_*} -\int_{[i]}\log|s_i'(\pi\circ\sigma(\omega))|d\mu(\omega)>2a$.
\end{center}
Now choose some small $0<\epsilon<a$, due to \ref{eq39}, for any $1\leq i\leq N_*$, we can find an integer $N_i$ large enough, such that
\begin{center}
$a_i:=\big|\int_{[i]}\log|s_i'(\pi\circ\sigma(\omega))|d\mu(\omega)-\int_{[i]}\log|s_i'(\pi\circ\sigma(\omega))|d\mu_n(\omega)\big|<\cfrac{\epsilon}{2^i}$
\end{center}
for any $n>N_i$. Now let $N_a=\max\{N_i\}_{i=1}^{N_*}$, we have
\begin{center}
$
\begin{array}{ll}
&\sum_{i=1}^{N_*} -\int_{[i]}\log|s_i'(\pi\circ\sigma(\omega))|d\mu_n(\omega)\\
\geq & \sum_{i=1}^{N_*} -\int_{[i]}\log|s_i'(\pi\circ\sigma(\omega))|d\mu(\omega)-\sum_{i=1}^{N_*} a_i\\
\geq & 2a-\sum_{i=1}^{N_*} \cfrac{\epsilon}{2^i}\\
> & a
\end{array}
$
\end{center}
for any $n>N_a$, which justifies the claim. 

The case $\lambda_{\mu}(\sigma)<\infty$ can be dealt with in a similar way, which is left to the interested readers.
\end{proof}

Similar to Remark \ref{rem1}, lemma \ref{lem14} holds for any sequence of measures converging to the measure $\mu$ under the setwise topology on $\mathcal{\hat{M}}(X)$, not only for the sequence of its concentrating measures, for any topological ambient space $X$. 

\begin{lemma}\label{lem13}
Let $G\subset U$ be a subset. Consider a sequences of real functions $\{f_n: G\rightarrow\mathbb{R}\}_{n=1}^\infty$ such that 
\begin{center}
$f_n(\bold{t})\rightarrow f(\bold{t})$ 
\end{center}
as $n\rightarrow\infty$ for some $f(\bold{t}): G\rightarrow\mathbb{R}$ at any $\bold{t}\in G$. Now if
\begin{center}
$HD(\{\bold{t}\in G: f_n(\bold{t})< 0\})\leq h_n$,
\end{center}
for a sequence of reals $\{h_n\}_{n=1}^\infty$, then we have
\begin{center}
$HD(\{\bold{t}\in G: f(\bold{t})< 0\})\leq \limsup_{n\rightarrow\infty} h_n$.
\end{center}
\end{lemma}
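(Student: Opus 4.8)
The plan is to prove the statement by a direct set-theoretic argument, exploiting the pointwise convergence $f_n(\bold{t})\rightarrow f(\bold{t})$ to control the sublevel set $\{f<0\}$ by a union of the sublevel sets $\{f_n<0\}$. The key observation is that if $f(\bold{t})<0$ at some point $\bold{t}$, then by pointwise convergence $f_n(\bold{t})<0$ for all sufficiently large $n$; in particular there are infinitely many indices $n$ with $f_n(\bold{t})<0$. Thus
\begin{equation}
\{\bold{t}\in G: f(\bold{t})<0\}\subset \bigcup_{k=1}^\infty \bigcap_{m=1}^\infty \bigcup_{n=m}^\infty \{\bold{t}\in G: f_n(\bold{t})<0\},
\end{equation}
and more usefully, each point of the left-hand side lies in $\{f_n<0\}$ for all large $n$, so it lies in $\bigcup_{n\geq m}\{f_n<0\}$ for every $m$. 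This gives the containment $\{f<0\}\subset \bigcup_{n\geq m}\{f_n<0\}$ for each fixed $m$.

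First I would fix the containment above and apply monotonicity and countable stability of Hausdorff dimension. Recall $HD\big(\bigcup_j A_j\big)=\sup_j HD(A_j)$ for countable unions. Hence for each $m$,
\begin{equation}
HD\big(\{\bold{t}\in G: f(\bold{t})<0\}\big)\leq HD\Big(\bigcup_{n\geq m}\{\bold{t}\in G: f_n(\bold{t})<0\}\Big)=\sup_{n\geq m} HD\big(\{\bold{t}\in G: f_n(\bold{t})<0\}\big)\leq \sup_{n\geq m} h_n.
\end{equation}
Since the left-hand side does not depend on $m$, I would then take the infimum over $m\in\mathbb{N}$ on the right, which by definition of the limit superior yields
\begin{equation}
HD\big(\{\bold{t}\in G: f(\bold{t})<0\}\big)\leq \inf_{m\in\mathbb{N}}\sup_{n\geq m} h_n=\limsup_{n\rightarrow\infty} h_n.
\end{equation}
This completes the chain of inequalities.

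The only genuinely delicate point is the passage from pointwise convergence to the containment $\{f<0\}\subset \bigcup_{n\geq m}\{f_n<0\}$: one must be careful that $f(\bold{t})<0$ (a \emph{strict} inequality) forces $f_n(\bold{t})<0$ eventually, which is exactly where strictness is used — had the hypotheses involved $\leq 0$ this step would fail, since a limit can equal zero while all $f_n(\bold{t})>0$. Everything else is a routine application of the standard properties of $HD$ (monotonicity and countable stability) together with the elementary identity $\limsup_n h_n=\inf_m\sup_{n\geq m} h_n$. I expect no real obstacle beyond bookkeeping; the argument is purely measure-theoretic and does not invoke any of the dynamical or transversality structure of the surrounding paper.
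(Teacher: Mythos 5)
Your proof is correct and takes essentially the same route as the paper's: both rest on the observation that strict negativity of $f(\bold{t})$ forces $\bold{t}\in\bigcup_{n\geq m}\{\bold{t}\in G: f_n(\bold{t})<0\}$ for every $m$ (the paper phrases this as containment in $\bigcap_{k=1}^\infty\bigcup_{n=k}^\infty\{\bold{t}\in G: f_n(\bold{t})<0\}$), followed by countable stability of Hausdorff dimension and the identity $\inf_m\sup_{n\geq m}h_n=\limsup_{n\rightarrow\infty}h_n$. The only blemish is the redundant outer union over $k$ in your first display (the expression inside does not depend on $k$), but it plays no role in the argument you actually use.
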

\begin{proof}
Note that since $\lim_{n\rightarrow\infty} f_n(\bold{t})\rightarrow f(\bold{t})$ at any time  $\bold{t}\in G$, then
\begin{center}
$\{\bold{t}\in G: f(\bold{t})< 0\}\subset \cap_{k=1}^\infty\cup_{n=k}^\infty\{\bold{t}\in G: f_n(\bold{t})< 0\}$.
\end{center}
This means
\begin{center}
$
\begin{array}{ll}
& HD(\{\bold{t}\in G: f(\bold{t})< 0\})\\
\leq & \inf\big\{HD(\cup_{n=k}^\infty\{\bold{t}\in G: f_n(\bold{t})<0\})\big\}_{k=1}^\infty\\
\leq & \inf\big\{\sup \{h_n\}_{n=k}^\infty\big\}_{k=1}^\infty\\
=&\limsup_{n\rightarrow\infty} h_n.
\end{array}
$
\end{center}
\end{proof}
Be careful that Lemma \ref{lem13} will not be true if the two $<$ are both substituted by $\leq$ in it. Now we are well prepared to prove Theorem \ref{thm12}. \\

Proof of Theorem \ref{thm12}:\\

\begin{proof}
Since $\mu$ is ergodic and $\bold{Y}$ is independent under the law $\mu$,  according to Lemma \ref{lem16}, consider the sequence of measures $\{\nu_{n,\bold{t}}\}_{\bold{t}\in U, n\in\mathbb{N}}$ projected from the ergodic concentrating measures $\{\mu_n\}_{n\in\mathbb{N}}$ of $\mu$. For fixed $n\in\mathbb{N}$, as the family of $n$-th truncated finite PIFS
\begin{center}
$\big\{S_n^\bold{t}=\{s_i^{\bold{t}}: X\rightarrow X\}_{i\in \mathbb{N}_n}\in \Gamma_X(\theta, V_n, \gamma_n, u_n, M_n)\big\}_{\bold{t}\in U}$
\end{center}
satisfying the continuity and strong transversality condition  with respect to the time parameter $\bold{t}\in U$ for some open neighbourhood $V_n$ of $v$, some $0<\gamma_n, u_n<1$ and $M_n>0$, apply \cite[Theorem 5.3]{SSU1} to the  family of truncated finite PIFS $S_n^\bold{t}$, we have
\begin{equation}\label{eq49}
HD\Big(\big\{\bold{t}\in G: dim^* \nu_{n,\bold{t}}<\min\{\frac{h_{\mu_n}(\sigma)}{\lambda_{\mu_n}^{\bold{t}}(\sigma)},\alpha\}\big\}\Big)\leq \min\Big\{\sup_{\bold{t}\in G} \frac{h_{\mu_n}(\sigma)}{\lambda_{\mu_n}^{\bold{t}}(\sigma)}, \alpha\Big\}+d-1
\end{equation}
for any $n\in\mathbb{N}$ and $\epsilon>0$. 
Note that 
\begin{center}
$dim^* \nu_{\bold{t}}= \lim_{n\rightarrow\infty} dim^* \nu_{n,\bold{t}}$
\end{center}
since $\lim_{n\rightarrow\infty}\nu_{n,\bold{t}}\stackrel{s}{\rightarrow}\nu_{\bold{t}}$ and
\begin{center}
$\lim_{n\rightarrow\infty} \min\{\cfrac{h_{\mu_n}(\sigma)}{\lambda_{\mu_n}^{\bold{t}}(\sigma)},\alpha\}$
\end{center}
exists for any $\bold{t}\in G$ since $\lim_{n\rightarrow\infty} \cfrac{h_{\mu_n}(\sigma)}{\lambda_{\mu_n}^{\bold{t}}(\sigma)}$ exists everywhere on $G$. Moreover, 
\begin{center}
$\lim_{n\rightarrow\infty} \Big(\min\big\{\sup_G \cfrac{h_{\mu_n}(\sigma)}{\lambda_{\mu_n}^{\bold{t}}(\sigma)}, \alpha\big\}+d-1\Big)=K_{\alpha, G}$.
\end{center}
Apply Lemma \ref{lem13} here, we get (\ref{eq38}).

\end{proof}

In the non-exploding case, Theorem \ref{thm12} degenerates into a more simple version as following. 

\begin{corollary}
Let 
\begin{center}
$\big\{S^\bold{t}=\{s_i^{\bold{t}}: X\rightarrow X\}_{i\in \mathbb{N}}\in \Gamma_X(\theta)\big\}_{\bold{t}\in U}$
\end{center}
be a family of infinite parabolic iterated function systems satisfying the continuity and strong transversality condition with respect to the vector-time parameter $\bold{t}\in U$. For an ergodic probability measure $\mu$ on the symbolic space $\mathbb{N}^\infty$ satisfying $\mu(\cup_{n=1}^\infty \mathbb{N}_n^\infty)=1$ and $0<h_\mu(\sigma), \lambda^{\bold{t}}_\mu(\sigma)<\infty$ at any time $\bold{t}\in U$, if the sequence of infinite random variables in $\bold{Y}$ under the law $\mu$ is independent, then 
\begin{equation}\label{eq38}
HD\Big(\Big\{\bold{t}\in G: dim^* \nu_\bold{t}<\min\big\{ \cfrac{h_{\mu}(\sigma)}{\lambda^{\bold{t}}_{\mu}(\sigma)},\alpha\big\}\Big\}\Big)\leq \min\Big\{\sup_{\bold{t}\in G} \cfrac{h_{\mu}(\sigma)}{\lambda^{\bold{t}}_{\mu}(\sigma)}, \alpha\Big\}+d-1
\end{equation}
for any $0<\alpha<1$ and $G\subset U$.
\end{corollary}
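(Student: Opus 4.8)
The plan is to recognize this statement as the direct specialization of Theorem \ref{thm12} to the situation where the entropy and all the Lyapunov exponents are finite, and to carry out the only genuine task, namely the evaluation of the two limits appearing in the definitions of $E_{\alpha, G}$ and $K_{\alpha, G}$ in terms of the original measure $\mu$ rather than its concentrating measures $\{\mu_n\}_{n\in\mathbb{N}}$.

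First I would fix an arbitrary $\bold{t}\in G$ and treat the single infinite PIFS $S^{\bold{t}}=\{s_i^{\bold{t}}\}_{i\in\mathbb{N}}\in\Gamma_X(\theta)$. Applying Lemma \ref{lem12} gives $\lim_{n\rightarrow\infty} h_{\mu_n}(\sigma)=h_\mu(\sigma)$, while applying Lemma \ref{lem14} to this system yields $\lim_{n\rightarrow\infty}\lambda^{\bold{t}}_{\mu_n}(\sigma)=\lambda^{\bold{t}}_{\mu}(\sigma)$. Since the hypothesis guarantees $0<h_\mu(\sigma)<\infty$ and $0<\lambda^{\bold{t}}_{\mu}(\sigma)<\infty$, the quotient is stable under the limit, so that
\[
\lim_{n\rightarrow\infty}\cfrac{h_{\mu_n}(\sigma)}{\lambda^{\bold{t}}_{\mu_n}(\sigma)}=\cfrac{h_{\mu}(\sigma)}{\lambda^{\bold{t}}_{\mu}(\sigma)}
\]
exists at every $\bold{t}\in G$ (indeed at every $\bold{t}\in U$). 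This verifies that the standing hypothesis of Theorem \ref{thm12}, the existence of the limit of the ratio along $G$, holds automatically in the non-exploding case, so no extra assumption on $G$ is needed.

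Next I would substitute this value into the two objects of Theorem \ref{thm12}. Because the map $x\mapsto\min\{x,\alpha\}$ is continuous, the limit passes through the minimum, giving $\lim_{n\rightarrow\infty}\min\{h_{\mu_n}(\sigma)/\lambda^{\bold{t}}_{\mu_n}(\sigma),\alpha\}=\min\{h_{\mu}(\sigma)/\lambda^{\bold{t}}_{\mu}(\sigma),\alpha\}$ pointwise in $\bold{t}$, so the exceptional set $E_{\alpha, G}$ collapses exactly to the set appearing on the left-hand side of the asserted inequality. Likewise the bounding constant becomes $\min\{\sup_{\bold{t}\in G} h_{\mu}(\sigma)/\lambda^{\bold{t}}_{\mu}(\sigma),\alpha\}+d-1$, its right-hand side. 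The conclusion is then immediate from inequality (\ref{eq38}) of Theorem \ref{thm12}.

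The substantive content is all contained in Theorem \ref{thm12}, so there is no essential obstacle here; the only point demanding a little care is the passage from the concentrating quantities to the quantities of $\mu$ itself. The mild subtlety is that Lemma \ref{lem14} is stated for a single PIFS, so I must apply it separately at each frozen parameter $\bold{t}$, the convergence being merely pointwise in $\bold{t}$, which is all that Theorem \ref{thm12} requires; and I must use the finiteness and strict positivity of $\lambda^{\bold{t}}_{\mu}(\sigma)$ to license dividing the two limits. Were the Lyapunov exponent allowed to vanish or explode, this quotient argument would break down, which is precisely why the hypothesis $0<h_\mu(\sigma),\lambda^{\bold{t}}_{\mu}(\sigma)<\infty$ is imposed.
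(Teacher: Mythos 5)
Your proof is correct and takes essentially the same route as the paper: the paper's own proof likewise combines Lemma \ref{lem12} and Lemma \ref{lem14} to conclude that $\lim_{n\rightarrow\infty} h_{\mu_n}(\sigma)/\lambda^{\bold{t}}_{\mu_n}(\sigma)=h_{\mu}(\sigma)/\lambda^{\bold{t}}_{\mu}(\sigma)$ under the finiteness and positivity hypotheses, and then applies Theorem \ref{thm12} directly. Your write-up merely makes explicit the pointwise-in-$\bold{t}$ application of Lemma \ref{lem14} and the substitution into $E_{\alpha,G}$ and $K_{\alpha,G}$, which the paper leaves implicit.
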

\begin{proof}
Under the assumption $0<h_\mu(\sigma), \lambda^{\bold{t}}_\mu(\sigma)<\infty$ at any time $\bold{t}\in U$, considering Lemma \ref{lem12} and Lemma \ref{lem14}, the result follows instantly from  Theorem \ref{thm12}. 

\end{proof}

In the exploding case, that is, either $h_\mu(\sigma)=\infty$ or $\lambda^{\bold{t}}_\mu(\sigma)=\infty$ for some $\bold{t}\in U$, the result depends heavily on the asymptotic behaviour of the sequence of concentrating measures of $\mu$, however, some estimations on the Hausdorff dimension of some form of exceptional parameters may still be possible upon reforming Lemma \ref{lem13}.

Note that most of the notions and results in this work apply to families of \emph{hyperbolic iterated function systems}, which are families of iterated function systems constituted by contractive hyperbolic maps only.

\end{document}